\newtheorem{definition}{Definition}[section]
\newtheorem{theorem}[definition]{Theorem}
\newtheorem{lemma}[definition]{Lemma}
\newtheorem{corollary}[definition]{Corollary}
\newtheorem{remark}[definition]{Remark}
\newtheorem{example}[definition]{Example}
\newtheorem{proposition}[definition]{Proposition}
\begin{document}

\title[Non-linear traces on matrix algebras]{Non-linear traces on matrix algebras, majorization, unitary invariant norms and 
2-positivity}

\author{Masaru Nagisa}
\address[Masaru Nagisa]{Department of Mathematics and Informatics, Faculty of Science, Chiba University, 
Chiba, 263-8522,  Japan: \ Department of Mathematical Sciences, Ritsumeikan University, Kusatsu, Shiga, 525-8577,  Japan}
\email{nagisa@math.s.chiba-u.ac.jp}
\author{Yasuo Watatani}
\address[Yasuo Watatani]{Department of Mathematical Sciences,
Kyushu University, Motooka, Fukuoka, 819-0395, Japan}
\email{watatani@math.kyushu-u.ac.jp}

\maketitle

\begin{abstract}
We study non-linear traces of Choquet type and Sugeno type on matrix algebras. 
They have certain partial additivities. We show that these partial additivities characterize 
non-linear traces of both Choquet type and Sugeno type respectively. 
There exists a close relation among non-linear traces of Choquet type, majorization, unitary invariant norms 
and 2-positivity.

\medskip\par\noindent
AMS subject classification: Primary 46L07, Secondary 47A64

\medskip\par\noindent
Key words: non-linear trace, monotone map

\end{abstract}

\section{Introduction}
In \cite{nagisawatatani} we study several classes of general non-linear positive maps between $C^*$-algebras. 
Recall that Ando-Choi \cite{A-C} and Arveson \cite{Ar2} initiated the study  non-linear completely 
positive maps and extend the Stinespring dilation theorem. 
 Hiai-Nakamura \cite{H-N} studied a non-linear counterpart of Arveson's 
Hahn-Banach type extension theorem \cite {Ar1} for completely positive linear maps. 
Bel\c{t}it\u{a}-Neeb \cite{B-N} studied non-linear completely positive maps and dilation 
theorems for real involutive algebras.  
Recently Dadkhah-Moslehian \cite{D-M} studied some properties of non-linear positive maps 
like Lieb maps and the multiplicative domain for 3-positive maps. Dadkhah-Moslehian-Kian 
\cite{D-M-K} investigate continuity of non-linear positive maps between $C^*$-algebras.

A typical example of non-linear positive mas is given as the 
functional calculus by a continuous positive function.  
See, for example, \cite{bhatia1}, \cite{bhatia2}, \cite{D} and \cite{Si}. 
In particular operator monotone functions are important to study operator means 
in Kubo-Ando theory in \cite{kuboando}.  

One important motivation of non-linear positive maps on $C^*$-algebras is
non-additive measure theory, which was initiated by Sugeno \cite{Su} and  Dobrakov \cite{dobrakov}. 
Choquet integrals \cite{Ch} and Sugeno integrals \cite{Su} are commonly used as non-linear integrals in 
non-additive measure theory. The differences of them are two operations used: 
sum and product for Choquet integrals and maximum and minimum for Sugeno integrals. 
They have partial additivities. 
Choquet integrals have  comonotonic additivity and 
Sugeno integrals have comonotonic F-additivity (fuzzy additivity). Conversely it is known that 
Choquet integrals and Sugeno integrals are characterized by these partial additivities,  see, for example, \cite{De}, 
\cite{schmeidler}, \cite{C-B}, \cite{C-M-R}, \cite{D-G}. 
More precisely, 
comonotonic additivity, positive homogeneity and monotony characterize 
Choquet integrals. Similarly comonotonic F-additivity, F-homogeneity and monotony
characterize Sugeno integrals.
As a general notion of non-linear integrals, we remark the inclusion-exclusion integral by Honda-Okazaki \cite{H-O}
for non-additive monotone measures.

In this paper we study non-linear traces of Choquet type and Sugeno type on matrix algebras, which are 
matrix versions of Choquet integrals and Sugeno integrals.  We 
characterize  non-linear traces of Choquet type and Sugeno type by partial additivities. 
We also show that there exists a close relation among 
non-linear traces of Choquet type, majorization, unitary invariant norms and 2-positivity.

Non-linear traces of Choquet type have comonotonic additivity and 
non-linear traces of Sugeno type have comonotonic F-additivity (fuzzy additivity). Conversely 
these partial additivities characterize 
non-linear traces of both Choquet type and Sugeno type respectively. More precisely
comonotonic additivity, positive homogeneity, unitary invariance and monotony characterize 
non-linear traces of Choquet type. Similarly comonotonic F-additivity, F-homogeneity, unitary invariance 
and monotony characterize non-linear traces of Sugeno type.

Let $A$ and $B$ be $C^*$-algbras.  We denote by $A^+$ and $B^+$ their positive cones. 
 Consider a non-linear positive map 
$\varphi : A^+ \rightarrow B^+$.  We should note that there exist at least three ways to extend such map 
$\varphi$ to $\tilde{\varphi} : A \rightarrow B$:   For $a \in A$, 
\begin{enumerate}
 \item[(1)] $\tilde{\varphi}(a) = \varphi(a_1) - \varphi(a_2)  + i (\varphi(a_3)  - \varphi(a_4))$ for $a = a_1 -a_2 + i(a_3 -a_4)$ 
with $a_1, a_2, a_3, a_4 \in A^+$ and $a_1a_2 = a_3a_4 = 0$.  
 \item[(2)] analytic extension in functional calculus. 
 \item[(3)] $\tilde{\varphi}(a) = \varphi(|a|)$ .
\end{enumerate}
We should choose an appropriate way in various situations. 

\vspace{3mm}

This work was supported by JSPS KAKENHI Grant Number \linebreak
JP17K18739.


\section{non-linear traces of Choquet type}
In this section we study non-linear traces of Choquet type on matrix algebras.

\begin{definition} \rm
Let $\Omega$ be a set and ${\mathcal B}$ a $\sigma$-field on $\Omega$. 
A function $\mu: {\mathcal B} \rightarrow [0, \infty]$ is  called a 
{\it monotone measure} if $\mu$ satisfies 
\begin{enumerate}
\item[$(1)$]  $\mu(\emptyset) = 0$, and 
\item[$(2)$] For any $A,B \in {\mathcal B}$, if $A \subset B$, 
then $\mu(A) \leq \mu(B)$. 

\end{enumerate}
\end{definition}

We recall  the discrete Choquet integral with respect to a monotone measure on a finite set 
$\Omega  = \{1,2, \dots, n\}$.  Let ${\mathcal B} = P(\Omega)$ be the set of all 
subsets of $\Omega$ and $\mu:  {\mathcal B} \rightarrow [0, \infty)$ be a finite 
monotone measure . 

\begin{definition} \rm 
The  discrete Choquet integral  of $f = (x_1, x_2,\dots, x_n) \in [0,\infty)^n$ 
with respect to a monotone measure $\mu$ on a finite set 
$\Omega  = \{1,2, \dots, n\}$ is defined as follows:  
$$
{\rm (C)}\int f d\mu = \sum_{i=1}^{n-1} (x_{\sigma(i) }- x_{\sigma(i+1)})\mu(A_i) 
+ x_{\sigma(n) }\mu(A_n) , 
$$
where $\sigma$ is a permutation on $\Omega$ such that  
$x_{\sigma(1)} \geq x_{\sigma(2)} \geq  \dots \geq x_{\sigma(n)}$ 
and  $A_i = \{\sigma(1),\sigma(2),\dots,\sigma(i)\}$.  
Here we should note that 
$$
f = \sum_{i=1}^{n-1} (x_{\sigma(i) }- x_{\sigma(i+1)})\chi_{A_i} 
+ x_{\sigma(n) }\chi_{A_n} .
$$
\end{definition}

Let $A = {\mathbb C}^n$ and define 
$({\rm C-}\varphi)_{\mu} :( {\mathbb C}^n)^+ \rightarrow {\mathbb C}^+$ 
by the  Choquet integral $({\rm C-}\varphi)_{\mu}(f) = {\rm (C)}\int f d\mu$.  Then 
$({\rm C-}\varphi)_{\mu}$ is a non-linear positive map and 
it is {\it monotone} in the sense that, if $0 \leq f \leq g$ then 
${\rm (C)}\int f d\mu \leq {\rm (C)}\int g d\mu$. It is {\it positively homogeneous} in the sense that 
${\rm (C)}\int kf d\mu = k  \ {\rm (C)}\int f d\mu$ for a scalar $k \geq 0$.

Real valued functions $f$ and $g$ on a set $\Omega$ are said to be {\it comonotonic} if 
$(f(s) -f(t))(g(s)-g(t)) \geq 0$ for any $s,t \in \Omega$, that is, $f(s) < f(t)$ implies$g(s) \leq g(t)$ 
for any $s,t \in \Omega$. In particular consider when  $\Omega = \{x_1, x_2, \dots,x_n\}$ is a finite $n$ points set. 
Then $f$ and $g$ are comonotonic if and only if there exists a permutation $\sigma$ on $\{1,2,\dots,n\}$ such that 
$$
f(x_{\sigma(1)}) \geq f(x_{\sigma(2)}) \geq \dots \geq  f(x_{\sigma(n)}).
$$
$$
g(x_{\sigma(1)}) \geq g(x_{\sigma(2)}) \geq \dots \geq  g(x_{\sigma(n)}).
$$
We should note that comonotonic relation is not transitive and not an equivalent relation. 
The Choquet integral has {\it comonotonic additivity}, that is, 
for any comonotonic pair $f$ and $g$, 
$$
{\rm (C)}\int (f  + g )d\mu= {\rm (C)}\int f d\mu  + {\rm (C)}\int g d\mu
$$
The Choquet integral has comonotonic additivity, positive homogeneity and monotony. Conversely 
 comonotonic additivity, positive homogeneity and monotony  characterize the Choquet integral. 

We shall consider a matrix version of the discrete Choquet integral and its characterization. 
We introduce non-linear traces of Choquet type on matrix algebras. 

\begin{definition} \rm A non-linear positive map 
$\varphi : (M_n({\mathbb C}))^+ \rightarrow  {\mathbb C}^+$ is called a {\it trace} if 
$\varphi$ is unitarily invariant, that is, 
$\varphi(uau^*) = \varphi(a)$ for any $a \in (M_n({\mathbb C}))^+$ and any unitary 
$u \in M_n({\mathbb C})$.  
\begin{itemize}
  \item  $\varphi$ is {\it monotone} if 
$a \leq b$ implies  $\varphi(a) \leq  \varphi(b)$ for any $a,b \in  (M_n({\mathbb C}))^+$. 
  \item  $\varphi$ is {\it positively homogeneous} if 
$\varphi(ka) = k\varphi(a)$ for any $a \in  (M_n({\mathbb C}))^+$ and any  scalar $k \geq 0$.
  \item $\varphi$ is {\it comonotonic additive on the spectrum} if 
$$
\varphi(f(a)  + g(a)) = \varphi(f(a)) + \varphi(g(a)) 
$$  
for any $a \in  (M_n({\mathbb C}))^+$ and 
any comonotonic functions $f$ and $g$   in $C(\sigma(a))$, where 
$f(a)$ is a functional calculus of $a$ by $f$ and  $C(\sigma(a))$ is 
the set of continuous functions on the spectrum $\sigma(a)$ of $a$. 
  \item $\varphi$ is {\it monotonic increasing additive on the spectrum} if 
$$
\varphi(f(a)  + g(a)) = \varphi(f(a)) + \varphi(g(a))
$$  
for any $a \in  (M_n({\mathbb C}))^+$ and 
any monotone increasing functions $f$ and $g$  in $C(\sigma(a))$ . Then by induction, 
we also have 
$$
 \varphi(\sum_{i=1}^n f_i(a)) = \sum_{i=1}^n \varphi (f_i(a))
$$
for any monotone increasing functions $f_1, f_2, \dots, f_n$ in  $C(\sigma(a))$.
\end{itemize}
\end{definition}

A finite monotone measure $\mu:  P(\Omega) \rightarrow [0, \infty)$ on a 
a finite set $\Omega  = \{1,2, \dots, n\}$ is {\it permutation invariant} if 
 $\mu(A) = \mu(\sigma (A))$ for any subset $A \subset \Omega$ and any permutation $\sigma$ on $\Omega$. 
The following lemma is clear.

\begin{lemma} Let $\alpha: \{0,1,2, \dots, n\} \rightarrow [0, \infty)$ be a 
monotone increasing function with $\alpha(0) = 0$, that is, 
$$
0 = \alpha(0) \leq  \alpha(1) \leq  \alpha(2) \leq \dots \leq  \alpha(n).
$$
Define a finite measure $\mu_{\alpha}:  P(\Omega) \rightarrow [0, \infty)$ on 
a finite set $\Omega  = \{1,2, \dots, n\}$ by 
$$
\mu_{\alpha}(A) = \alpha( ^{\#}A) \ \ \ \text{for a subset } A \subset \Omega
$$
where $^{\#}A$ is the cardinality of $A$. Then $\mu_{\alpha}$ is a permutation invariant 
monotone measure. Conversely any  permutation invariant monotone measure 
on a finite set $\Omega  = \{1,2, \dots, n\}$ has this form. 
\end{lemma}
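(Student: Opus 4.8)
The plan is to verify the two directions directly from the definitions, as the content is entirely elementary. For the forward implication, suppose $\alpha$ is monotone increasing with $\alpha(0)=0$ and put $\mu_{\alpha}(A) = \alpha(^{\#}A)$. First I would note $\mu_{\alpha}(\emptyset) = \alpha(0) = 0$. Next, given $A \subset B$, we have $^{\#}A \leq\, ^{\#}B$, so $\mu_{\alpha}(A) = \alpha(^{\#}A) \leq \alpha(^{\#}B) = \mu_{\alpha}(B)$ by monotonicity of $\alpha$; hence $\mu_{\alpha}$ is a monotone measure. Finally, for any permutation $\sigma$ on $\Omega$, the map $\sigma$ restricts to a bijection from $A$ onto $\sigma(A)$, so $^{\#}\sigma(A) = \,^{\#}A$ and therefore $\mu_{\alpha}(\sigma(A)) = \alpha(^{\#}\sigma(A)) = \alpha(^{\#}A) = \mu_{\alpha}(A)$. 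Thus $\mu_{\alpha}$ is a permutation invariant monotone measure.

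For the converse, let $\mu : P(\Omega) \rightarrow [0,\infty)$ be a permutation invariant monotone measure. I would define $\alpha : \{0,1,\dots,n\} \rightarrow [0,\infty)$ by choosing, for each $k$, some subset $A \subset \Omega$ with $^{\#}A = k$ and setting $\alpha(k) = \mu(A)$. The one point that needs an argument is that this does not depend on the choice of $A$: if $^{\#}A = \,^{\#}B$, then there is a permutation $\sigma$ of $\Omega$ with $\sigma(A) = B$, so permutation invariance yields $\mu(A) = \mu(\sigma(A)) = \mu(B)$. Hence $\alpha$ is well defined, and by construction $\mu(A) = \alpha(^{\#}A) = \mu_{\alpha}(A)$ for every $A \subset \Omega$.

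It then remains to check that $\alpha$ has the stated properties. We have $\alpha(0) = \mu(\emptyset) = 0$, and for $0 \leq k \leq l \leq n$ one may pick subsets $A \subset B \subset \Omega$ with $^{\#}A = k$ and $^{\#}B = l$, so that $\alpha(k) = \mu(A) \leq \mu(B) = \alpha(l)$ by monotonicity of $\mu$; thus $\alpha$ is monotone increasing with $\alpha(0)=0$. This completes the argument. I do not expect any genuine obstacle here; the only mildly delicate step is the well-definedness of $\alpha$, which rests on the elementary observation that any two subsets of a finite set with the same cardinality are interchanged by some permutation of that set.
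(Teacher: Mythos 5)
Your proof is correct and complete; the paper simply declares this lemma ``clear'' and offers no proof, and your argument is exactly the elementary verification the authors intend, with the one genuinely checkable point (well-definedness of $\alpha$ via the fact that equicardinal subsets are related by a permutation) properly addressed.
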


\begin{definition} \rm
Let $\alpha: \{0,1,2, \dots, n\} \rightarrow [0, \infty)$ be a 
monotone increasing function with $\alpha(0) = 0$. We denote by 
$\mu_{\alpha}$ the associated permutation invariant 
monotone measure on $\Omega  = \{1,2, \dots, n\}$. 
Define 
$\varphi_{\alpha} : (M_n({\mathbb C}))^+ \rightarrow  {\mathbb C}^+$ 
as follows:  For $a \in (M_n({\mathbb C}))^+$, \\
let 
$\lambda(a) = (\lambda_1(a),\lambda_2(a),\dots, \lambda_n(a))$ be the list of the 
eigenvalues of $a$ in decreasing order :
$\lambda_1(a) \geq \lambda_2(a) \geq \dots \geq \lambda_n(a)$ with 
counting multiplicities.  Let
\begin{align*}
\varphi_{\alpha}(a) &= \sum_{i=1} ^{n-1} ( \lambda_i(a)- \lambda_{i+1}(a))\mu_{\alpha}(A_i) 
+ \lambda_n (a) \mu_{\alpha}(A_n)\\
&= \sum_{i=1} ^{n-1} ( \lambda_i(a)- \lambda_{i+1}(a)) \alpha( ^{\#}A_i) 
+ \lambda_n (a) \alpha(^{\#}A_n)\\
& =  \sum_{i=1} ^{n-1} ( \lambda_i(a)- \lambda_{i+1}(a)) \alpha(i) 
+ \lambda_n (a) \alpha(n) ,
\end{align*}
where $A_i = \{1,2,\dots,i \}$.
We call $\varphi_{\alpha}$ the non-linear trace of Choquet type associated with $\alpha$.  
Note that $\varphi_{\alpha}$ is norm continuous on $(M_n({\mathbb C}))^+$, since 
each $\lambda_i$ is norm continuous on $(M_n({\mathbb C}))^+$. 
\end{definition}

\begin{remark} \rm
Let  $\mu:  P(\Omega) \rightarrow [0, \infty)$ be a finite monotone measure on 
a finite set $\Omega  = \{1,2, \dots, n\}$, which is not necessarily permutation invariant. 
Put $\alpha (i) = \mu (\{1,2,\dots,i\} )$ and  $\alpha (0) = 0$. Let 
$\mu_{\alpha}$ be the associated permutation invariant monotone measure and 
$\varphi_{\alpha}$ the non-linear trace of Choquet type associated with $\alpha$. 
This $\varphi_{\alpha}$ coincides with the non-linear trace of Choquet type that we introduced in 
our previous paper  \cite{nagisawatatani}.  
\end{remark}

We shall characterize non-linear traces of Choquet type. 

\begin{theorem} 
Let $\varphi : (M_n({\mathbb C}))^+ \rightarrow  {\mathbb C}^+$ be a non-linear 
positive map.  Then the following conditions are equivalent:
\begin{enumerate}
\item[$(1)$]  $\varphi$ is a non-linear trace $\varphi = \varphi_{\alpha}$  of Choquet type associated with  
a monotone increasing function $\alpha: \{0,1,2, \dots, n\} \rightarrow [0, \infty)$  with $\alpha(0) = 0$
\item[$(2)$] $\varphi $ is monotonic increasing additive on the spectrum, unitarily invariant, monotone and 
positively homogeneous. 
\item[$(3)$] $\varphi $ is comonotonic additive on the spectrum, unitarily invariant, monotone and 
positively homogeneous. 
\end{enumerate}
\end{theorem}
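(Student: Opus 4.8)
The plan is to prove the cycle of implications $(1)\Rightarrow(3)\Rightarrow(2)\Rightarrow(1)$, which is the most economical route since comonotonic additivity on the spectrum formally implies monotonic increasing additivity on the spectrum (monotone increasing functions on a fixed spectrum are automatically comonotonic), giving $(3)\Rightarrow(2)$ for free.

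For $(1)\Rightarrow(3)$: assume $\varphi=\varphi_\alpha$. Unitary invariance, monotonicity (from the Weyl monotonicity of eigenvalues $\lambda_i(a)\le\lambda_i(b)$ when $a\le b$, plus the fact that $\varphi_\alpha(a)$ can be rewritten as $\sum_i \lambda_i(a)(\alpha(i)-\alpha(i-1))$, a nonnegative combination of eigenvalues), and positive homogeneity are all immediate from the displayed formula. For comonotonic additivity on the spectrum, fix $a\in(M_n(\C))^+$ and comonotonic $f,g\in C(\sigma(a))$. Diagonalize $a=u\,\mathrm{diag}(\lambda_1(a),\dots,\lambda_n(a))\,u^*$; then $f(a)$ and $g(a)$ are simultaneously diagonalized by the same $u$, so by unitary invariance the problem reduces to the commutative case $A=\C^n$. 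There $\varphi_\alpha$ restricted to the eigenvalue lists is exactly the discrete Choquet integral against the permutation-invariant measure $\mu_\alpha$, and the vectors $(f(\lambda_i(a)))_i$ and $(g(\lambda_i(a)))_i$ are comonotonic (because $f,g$ comonotonic on $\sigma(a)$ means they can be arranged decreasingly by a common permutation of the $\lambda_i$). Hence comonotonic additivity of the classical Choquet integral, quoted from the discussion preceding the theorem, finishes it.

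The substantive direction is $(2)\Rightarrow(1)$. Assume $\varphi$ is monotonic increasing additive on the spectrum, unitarily invariant, monotone and positively homogeneous. First use unitary invariance to see that $\varphi(a)$ depends only on the eigenvalue list $\lambda(a)$, so $\varphi$ descends to a map $\psi$ on $\{(x_1,\dots,x_n): x_1\ge\cdots\ge x_n\ge 0\}$, equivalently on $(\C^n)^+$ modulo permutation. Define $\alpha(i):=\varphi(p_i)$ where $p_i$ is any rank-$i$ orthogonal projection (well-defined by unitary invariance), and $\alpha(0):=0$ (forced by positive homogeneity with $k=0$, since $0\cdot a=0$). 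Monotonicity of $\varphi$ gives $\alpha(0)\le\alpha(1)\le\cdots\le\alpha(n)$. Now for $a\in(M_n(\C))^+$ with eigenvalues $\lambda_1(a)\ge\cdots\ge\lambda_n(a)\ge 0$, write, using the spectral decomposition, $a=\sum_{i=1}^{n-1}(\lambda_i(a)-\lambda_{i+1}(a))q_i+\lambda_n(a)q_n$ where $q_i$ is the spectral projection onto the top $i$ eigenvectors (a rank-$i$ projection). The functions realizing the summands $(\lambda_i(a)-\lambda_{i+1}(a))q_i$ as functional calculi of $a$ are monotone increasing step functions on $\sigma(a)$, and they sum to the identity function $t\mapsto t$ times appropriate constants — more precisely, $t=\sum_i c_i h_i(t)$ on $\sigma(a)$ with each $h_i$ monotone increasing and $h_i(a)$ a rank-$i$ projection up to the coefficient. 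Applying monotonic increasing additivity on the spectrum (the $n$-fold version stated in the definition), together with positive homogeneity on each term, yields $\varphi(a)=\sum_{i=1}^{n-1}(\lambda_i(a)-\lambda_{i+1}(a))\alpha(i)+\lambda_n(a)\alpha(n)=\varphi_\alpha(a)$.

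The main obstacle I anticipate is the careful bookkeeping in $(2)\Rightarrow(1)$ when some eigenvalues coincide: then the "spectral projections onto the top $i$ eigenvectors" are not canonically defined, and the step functions $h_i$ used as functional calculi of $a$ may not be continuous on $\sigma(a)$ as a subset of $\R$ — one must choose representative continuous monotone increasing functions on $\sigma(a)$ agreeing with the desired step functions at the points of $\sigma(a)$, which is possible precisely because $\sigma(a)$ is finite. One must also check that $\varphi(p_i)$ genuinely depends only on the rank $i$, which is unitary invariance again, and that positive homogeneity lets one pull scalars out of $\varphi(c\,h_i(a))=c\,\varphi(h_i(a))$ even when $c=0$. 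None of these is deep, but the argument should be written so that the degenerate-spectrum case is explicitly handled rather than swept aside.
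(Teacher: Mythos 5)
Your proposal is correct and follows essentially the same route as the paper: the key direction $(2)\Rightarrow(1)$ uses the identical construction, namely $\alpha(i):=\varphi(\text{rank-}i\text{ projection})$ together with the decomposition of $a$ into monotone increasing step functions $f_i(a)=(\lambda_i(a)-\lambda_{i+1}(a))q_i$, and your $(3)\Rightarrow(2)$ via ``monotone increasing implies comonotonic'' is exactly the paper's. The only cosmetic differences are that you arrange the implications as a single cycle and delegate $(1)\Rightarrow(3)$ to the classical comonotonic additivity of the discrete Choquet integral after diagonalizing, where the paper recomputes it via a common ordering permutation; your remark that the degenerate-spectrum case is harmless (the offending terms carry coefficient $\lambda_i-\lambda_{i+1}=0$) is accurate.
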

\begin{proof}
(1)$\Rightarrow$(2)  Assume that $\varphi$ is a non-linear trace $\varphi = \varphi_{\alpha}$  of Choquet type associated with  
a monotone increasing function $\alpha$. 
For $a \in (M_n({\mathbb C}))^+$, 
let 
$$
a = \sum_{i=1}^n \lambda_i(a) p_i
$$
be the spectral decomposition of $a$, where 
$\lambda(a) = (\lambda_1(a),\lambda_2(a),\dots, \lambda_n(a))$ is the list of the 
eigenvalues of $a$ in decreasing order :
$\lambda_1(a) \geq \lambda_2(a) \geq \dots \geq \lambda_n(a)$ with 
counting multiplicities.  Then the spectrum $\sigma(a) = \{\lambda_i(a) \ | \ i = 1,2,\dots,n\}$. 
For any monotone increasing functions $f$ and $g$  in $C(\sigma(a))$, 
we have the  spectral decompositions 
\begin{gather*}
f(a) = \sum_{i=1}^n f(\lambda_i(a)) p_i, \ \ \ g(a) = \sum_{i=1}^n g(\lambda_i(a)) p_i,   \\
\text{and } (f+g)(a) = \sum_{i=1}^n (f(\lambda_i(a))+ (g(\lambda_i(a))) p_i. 
\end{gather*}
Since $f+g$ is also monotone increasing on $\sigma(a)$ as well as $f$ and $g$, we have 
\begin{gather*}
f(\lambda_1(a)) \geq f(\lambda_2(a)) \geq \dots \geq f(\lambda_n(a)),    \\
g(\lambda_1(a)) \geq g(\lambda_2(a)) \geq \dots \geq g(\lambda_n(a)),   \\
\text{and }  (f+g)(\lambda_1(a)) \geq (f+g)(\lambda_2(a)) \geq \dots \geq (f+g)(\lambda_n(a)).
\end{gather*}
Therefore
\begin{gather*}
\varphi_{\alpha}(f(a)) = \sum_{i=1} ^{n-1} ( f(\lambda_i(a))- f(\lambda_{i+1}(a))) \alpha(i) 
+ f(\lambda_n (a)) \alpha(n),   \\
\varphi_{\alpha}(g(a)) = \sum_{i=1} ^{n-1} ( g(\lambda_i(a))- g(\lambda_{i+1}(a))) \alpha(i) 
+ g(\lambda_n (a)) \alpha(n), 
\end{gather*}
and 
\begin{align*}
  & \varphi_{\alpha}(f(a)+g(a))  = \varphi_{\alpha}((f+g)(a)) \\
 = & \sum_{i=1} ^{n-1} ( (f+g)(\lambda_i(a))- (f+g)(\lambda_{i+1}(a))) \alpha(i) 
+ (f+g)(\lambda_n (a)) \alpha(n) \\
 = & \varphi_{\alpha}(f(a))  + \varphi_{\alpha}(g(a)) . 
\end{align*}
Thus $\varphi_{\alpha}$ is monotonic increasing additive on the spectrum. 

For $a,b \in (M_n({\mathbb C}))^+$, let 
$\lambda(a) = (\lambda_1(a),\lambda_2(a),\dots, \lambda_n(a))$ be  the list of the 
eigenvalues of $a$ in decreasing order and
$\lambda(b) = (\lambda_1(b),\lambda_2(b),\dots,$ $\lambda_n(b))$ be  the list of the 
eigenvalues of $b$ in decreasing order.  
Assume that $a \leq b$. 
By the mini-max  principle for eigenvalues, we have that 
$\lambda_i(a) \leq \lambda_i(b)$ for $i = 1,2,\dots,n$.  Hence 
\begin{align*}
\varphi_{\alpha}(a) &= \sum_{i=1} ^{n-1} ( \lambda_i(a)- \lambda_{i+1}(a)) \alpha(i) 
+ \lambda_n (a) \alpha(n) \\
& = \sum_{i=2} ^{n}  \lambda_i(a)( \alpha(i) -  \alpha(i-1)) +  \lambda_1(a)\alpha(1) \\
& \leq \sum_{i=2} ^{n}  \lambda_i(b)( \alpha(i) -  \alpha(i-1)) +  \lambda_1(b)\alpha(1)
= \varphi_{\alpha}(b). 
\end{align*}
Thus $\varphi_{\alpha}$ is monotone. 

For a positive scalar $k$ , $ka = \sum_{i=1}^n k \lambda_i(a) p_i$ and 
$$
\lambda(ka) = (k\lambda_1(a),k\lambda_2(a),\dots,k \lambda_n(a))
$$ 
is the list of the 
eigenvalues of $ka$ in decreasing order, 
\begin{align*}
\varphi_{\alpha}(ka) &= \sum_{i=1} ^{n-1} ( k \lambda_i(a)- k \lambda_{i+1}(a)) \alpha(i) 
+ k\lambda_n (a) \alpha(n)\\
 &= k (\sum_{i=1} ^{n-1} ( \lambda_i(a)- \lambda_{i+1}(a)) \alpha(i) 
+ \lambda_n (a) \alpha(n) )= k \varphi_{\alpha}(a). 
\end{align*}
Thus $\varphi_{\alpha}$ is positively homogeneous. 
 It is clear 
that $\varphi_{\alpha}$ is unitarily invariant by definition.

(2)$\Rightarrow$(1)  Assume that $\varphi $ is monotonic increasing additive on the spectrum, unitarily invariant, monotone and 
positively homogeneous. Let $I = p_1 + p_2 + \dots +p_n$ be the decomposition of the identity by 
minimal projections. Define  a function $\alpha: \{0,1,2, \dots, n\} \rightarrow [0, \infty)$  with $\alpha(0) = 0$
by $\alpha (i) = \varphi(p_1+ p_2 + \dots + p_i)$. Since  $\varphi $ is unitarily invariant, $\alpha$ does not depend 
on the choice of minimal projections. Since  $\varphi $ is monotone, $\alpha$ is monotone increasing. 
For $a \in (M_n({\mathbb C}))^+$, 
let 
$a = \sum_{i=1}^n \lambda_i(a) p_i$
be the spectral decomposition of $a$, where 
$\lambda(a) = (\lambda_1(a),\lambda_2(a),\dots, \lambda_n(a))$ is the list of the 
eigenvalues of $a$ in decreasing order with counting multiplicities.   
Define $n$ functions $f_1, f_2, \dots, f_n \in C(\sigma(a))$  by 
$$
f_i (x) = \begin{cases}    \lambda_i(a) - \lambda_{i+1}(a), \quad & \text{ if } x\in  \{ \lambda_1(a), \lambda_2(a)\dots, \lambda_i(a) \}  \\
                                 0  &  \text{ if }  x\in \{ \lambda_{i+1}(a), \dots, \lambda_n(a) \}
\end{cases}
$$
for $i = 1,2,\dots, n-1$ and
$$
f_n(x) = \lambda_n(a) \ \ \  \text{ for } x \in \{ \lambda_1(a), \lambda_2(a),\dots, \lambda_n(a) \}.
$$
Then  $f_1, f_2, \dots, f_n$ are monotone increasing functions in  $C(\sigma(a))$ such that 
$$
f_1(x) + f_2(x) + \dots +f_n(x) = x  \ \ \ \text{ for } x \in \{ \lambda_1(a),\lambda_2(a),\dots, \lambda_n(a) \} .
$$
Therefore 
$$
f_1(a) + f_2(a) + \dots +f_n(a) = a .
$$
Moreover we have that 
$$
f_i(a) =  (\lambda_i(a) - \lambda_{i+1}(a))(p_1 + p_2 + \dots +p_i) \quad \text{ for } i=1,2,\dots, n-1
$$
and $ f_n(a) =  \lambda_n(a) (p_1 + p_2 + \dots +p_n) = \lambda_n(a) I$.

Since $\varphi $ is monotonic increasing additive on the spectrum and 
positively homogeneous, we have that 
\begin{align*}
   \varphi(a) & = \varphi(f_1(a) + f_2(a) + \dots +f_n(a)) \\
&=  \varphi(f_1(a)) + \varphi(f_2(a)) + \dots +\varphi(f_n(a)) \\
&=  \sum_{i=1} ^{n-1} \varphi( ( \lambda_i(a)- \lambda_{i+1}(a))(p_1 + p_2 + \dots p_i)) 
+ \varphi(\lambda_n(a) (I))\\
&=  \sum_{i=1} ^{n-1}  ( \lambda_i(a)- \lambda_{i+1}(a))\varphi(p_1 + p_2 + \dots p_i) 
+ \lambda_n(a)\varphi (I)\\
&=  \sum_{i=1} ^{n-1}  ( \lambda_i(a)- \lambda_{i+1}(a))\alpha(i) 
+ \lambda_n(a)\alpha(n) = \varphi_{\alpha}(a).
\end{align*}
Therefore $\varphi$ is equal to the  non-linear trace $\varphi_{\alpha}$  of Choquet type associated with  
a monotone increasing function $\alpha$.

(3)$\Rightarrow$(2)  It is clear from the fact that any monotone increasing functions $f$ and $g$  in $C(\sigma(a))$ 
are comonotonic.

(1)$\Rightarrow$(3)  For $a \in (M_n({\mathbb C}))^+$, 
let $a = \sum_{i=1}^n \lambda_i(a) p_i$
be the spectral decomposition of $a$, where 
$\lambda(a) = (\lambda_1(a),\lambda_2(a),\dots, \lambda_n(a))$ is the list of the 
eigenvalues of $a$ in decreasing order with counting multiplicities.   
For any comonotonic functions $f$ and $g$  in $C(\sigma(a))$, there exists a permutation 
$\tau$ on $\{1,2,\dots,n\}$ such that 
$$
f(\lambda_{\tau(1)}(a)) \geq f(\lambda_{\tau(2)}(a)) \geq \dots \geq f(\lambda_{\tau(n)}(a)), 
$$
$$
g(\lambda_{\tau(1)}(a)) \geq g(\lambda_{\tau(2)}(a)) \geq \dots \geq g(\lambda_{\tau(n)}(a))
$$
by ordering with multiplicities. 
Considering this fact, we can prove  (1)$\Rightarrow$(3) similarly as (1)$\Rightarrow$(2). 
\end{proof}

\section{non-linear traces of Choquet type, majorization, unitary invariant norms and 2-positivity}

In this section we discuss relations among non-linear traces of Choquet type,  the majorization theory 
for eigenvalues and singular values of matrices and unitary invariant norms of matrices. We also studty 
2-positivity of non-linear functional. 
For $a \in (M_n({\mathbb C}))^+$, 
let 
$a = \sum_{i=1}^n \lambda_i(a) p_i$
be the spectral decomposition of $a$, where 
$\lambda(a) = (\lambda_1(a),\lambda_2(a),\dots, \lambda_n(a))$ is the list of the 
eigenvalues of $a$ in decreasing order with counting multiplicities.   
For fixed $i = 1,2,\dots,n$,  we denote by $\lambda_i$ a non-linear map 
$ \lambda_i: (M_n({\mathbb C}))^+ \rightarrow  {\mathbb C}^+$ given by 
$\lambda_i(a)$ for $a \in M_n({\mathbb C})^+$.

\begin{proposition} 
The set \rm{Ch-T} $:= \{ \varphi_{\alpha} | \alpha: \{0,1,2, \dots, n\} \rightarrow [0, \infty)$ is monotone  increasing and  $\alpha(0) = 0\}$ 
of non-linear traces of Choquet type on $(M_n({\mathbb C}))^+$ is a convex subset of 
the affine set $\{ h: (M_n({\mathbb C}))^+ \rightarrow  {\mathbb C}^+\}$ with 
pointwise sum and  positive scalar multiplication. 
Then  {\rm Ch-T} is equal to a convex cone 
$$
\{ \sum _{i=1}^n  c_i \lambda_i  \ | \    c_i \in [0,\infty), i=1, 2,  \dots, n  \}, 
$$
where $\alpha$ and $c_i$ ($i=1,\dots,n)$  are related by 
$c_1 = \alpha(1)$, $c_i = \alpha(i) - \alpha(i-1)$  $(i=2,3,\ldots,n)$  or  $\alpha(j)= \sum_{i=1}^j c_i$  $(j=1,2,\ldots,n)$. 

Moreover consider the normalized set 
$$ 
S :=\{ \varphi \in \text{\rm Ch-T} \; | \varphi (I) = 1 \}. 
$$ 
Then the set of extreme points of the set $S$ is equal to  $\{ \lambda_i  | i=1, 2,  \dots, n  \}$. 
\end{proposition}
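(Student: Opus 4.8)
The plan is to parametrize $\text{\rm Ch-T}$ by the coefficient vector $(c_1,\dots,c_n)$ and then identify the normalized set $S$ with a standard simplex. First I would record the Abel-summation identity already used in the proof of the preceding Theorem: for $a\in(M_n(\C))^+$ one has
\begin{align*}
\varphi_{\alpha}(a)&=\sum_{i=1}^{n-1}(\lambda_i(a)-\lambda_{i+1}(a))\alpha(i)+\lambda_n(a)\alpha(n)\\
&=\lambda_1(a)\alpha(1)+\sum_{i=2}^{n}\lambda_i(a)\bigl(\alpha(i)-\alpha(i-1)\bigr),
\end{align*}
so that $\varphi_{\alpha}=\sum_{i=1}^n c_i\lambda_i$ with $c_1=\alpha(1)$ and $c_i=\alpha(i)-\alpha(i-1)$ for $i\ge 2$. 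Monotonicity of $\alpha$ together with $\alpha(0)=0$ is precisely the condition $c_i\ge 0$ for all $i$; conversely, given $(c_1,\dots,c_n)\in[0,\infty)^n$, the function $\alpha(j):=\sum_{i\le j}c_i$ (with $\alpha(0)=0$) is monotone increasing and recovers $\varphi_{\alpha}=\sum_i c_i\lambda_i$. This yields the equality $\text{\rm Ch-T}=\{\sum_{i=1}^n c_i\lambda_i : c_i\in[0,\infty)\}$ together with the stated dictionary between $\alpha$ and $(c_i)$. Since the correspondence $\alpha\leftrightarrow(c_i)$ is linear, $\varphi_{\alpha}+\varphi_{\beta}=\varphi_{\alpha+\beta}$ and $k\varphi_{\alpha}=\varphi_{k\alpha}$ for $k\ge 0$, while $\alpha+\beta$ and $k\alpha$ remain monotone increasing and vanish at $0$; hence $\text{\rm Ch-T}$ is closed under pointwise sum and positive scalar multiplication, i.e. it is a convex cone, in particular a convex subset of the ambient set of maps.

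For the extreme points of $S$, I would first compute $\varphi_{\alpha}(I)=\sum_{i=1}^n c_i\,\lambda_i(I)=\sum_{i=1}^n c_i$, using $\lambda_i(I)=1$ for all $i$. Thus under the above parametrization $S$ corresponds to the standard simplex $\Delta_{n-1}:=\{(c_1,\dots,c_n)\in[0,\infty)^n : \sum_i c_i=1\}$ via the affine surjection $\Phi:\Delta_{n-1}\to S$, $\Phi(c)=\sum_i c_i\lambda_i$ (surjectivity being the equality just proved). The one step needing an argument is that $\Phi$ is injective: fixing minimal projections $p_1,\dots,p_n$ with $p_1+\dots+p_n=I$ and setting $q_k:=p_1+\dots+p_k$, one has $\lambda_i(q_k)=1$ for $i\le k$ and $\lambda_i(q_k)=0$ for $i>k$, so $\Phi(c)(q_k)=\sum_{i\le k}c_i$, and these $n$ numbers recover $c$ uniquely. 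Hence $\Phi$ is the restriction of an injective linear map, so it is an affine isomorphism onto $S$ and carries the extreme points of $\Delta_{n-1}$, namely the standard basis vectors $e_1,\dots,e_n$, bijectively onto the extreme points of $S$; since $\Phi(e_i)=\lambda_i$, this gives $\mathrm{ext}(S)=\{\lambda_i : i=1,2,\dots,n\}$. (Equivalently and more directly: if $\lambda_i=\tfrac12(\psi+\chi)$ with $\psi,\chi\in S$, testing on the $q_k$ and differencing forces the coefficient vectors of $\psi$ and $\chi$ both to equal $e_i$, so $\psi=\chi=\lambda_i$; while any $\varphi\in S$ whose coefficient vector has at least two positive entries is a nontrivial convex combination of two distinct elements of $S$, hence not extreme.)

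The main — and essentially the only — obstacle is the injectivity of the coefficient parametrization, i.e. verifying that distinct weight vectors genuinely give distinct non-linear functionals on $(M_n(\C))^+$; this is exactly what the test against the flags $q_k=p_1+\dots+p_k$ accomplishes. Everything else reduces to the Abel-summation rewriting already available from the proof of the Theorem together with the elementary convex geometry of the standard simplex.
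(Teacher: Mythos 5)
Your proof is correct and follows essentially the same route as the paper's: Abel summation to rewrite $\varphi_{\alpha}=\sum_{i=1}^n c_i\lambda_i$, the dictionary $\alpha\leftrightarrow(c_i)$, and the identification of $S$ with the standard simplex in ${\mathbb R}^n_+$. The only difference is that you explicitly verify injectivity of the parametrization by evaluating on the projections $q_k=p_1+\dots+p_k$, a point the paper's proof asserts (``affine isomorphic'') without detail — a worthwhile addition, but not a change of method.
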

\begin{proof}
The proof is based on the following observation:
\begin{align*}
\varphi_{\alpha}(a) &= \sum_{i=1} ^{n-1} ( \lambda_i(a)- \lambda_{i+1}(a)) \alpha(i) 
+ \lambda_n (a) \alpha(n) \\
& = \sum_{i=2} ^{n}  \lambda_i(a)( \alpha(i) -  \alpha(i-1)) +  \lambda_1(a)\alpha(1) \\
& = \sum_{i=2} ^{n}  c_i \lambda_i(a)  + c_1 \lambda_1(a) .
\end{align*}
Let ${\mathbb R}^n_+ = \{ x = (x_1,x_2,\dots,x_n) \in {\mathbb R}^n  |  x_i \geq 0, \; i = 1, 2, \dots, n \}$ be 
the natural convex cone.  
Then the convex cone $\{ \sum _{i=1}^n  c_i \lambda_i  \ | \    c_i \in [0,\infty), i=1, 2, \dots, n  \}$ 
is affine isomorphic to ${\mathbb R}^n_+$  through  $\varphi = \sum _{i=1}^n  c_i \lambda_i \mapsto (c_1,c_2$ $,\dots,c_n)$.  
Since $\lambda_i(I) =1$ for $ i=1, 2, \dots, n$ , the convex subset $S$  is isomorphic to 
$\{ x = (x_1,x_2,\dots,x_n) \in {\mathbb R}^n_+  |  \sum_{i = 1}^n x_i = 1 \}$. 
Therefore the set of extreme points of $S$ is equal to  $\{ \lambda_i  | i=1, 2,  \dots, n  \}$. 
\end{proof}

\begin{example} \rm  
We sometimes denote $\alpha$ by $\alpha = (\alpha(0), \alpha(1),\alpha(2), \dots, \alpha(n))$. 
\begin{enumerate}
\item[(1)] If $\alpha =(0,1,2,3,\dots,n)$, then $\varphi_{\alpha}(a) = {\rm Tr}(a) = \sum_{i=1}^n \lambda_i(a)$ is the usual linear trace. 
\item[(2)] If $\alpha =(0,1,1,1,\dots,1)$, then $\varphi_{\alpha}(a) = \lambda_1(a)$.
\item[(3)] If $\alpha =(\overbrace{0,\ldots,0}^i, \overbrace{1,\ldots,1}^{n-i+1}$, then $\varphi_{\alpha}(a) = \lambda_i(a)$ $(i=2,3,\ldots,n-1)$.
\item[(4)] If $\alpha =(0,0,0,\dots,0,1)$, then $\varphi_{\alpha}(a) = \lambda_n(a)$.
\end{enumerate}
\end{example}

Let $x = (x_1,x_2,\dots,x_n)$ be a vector in ${\mathbb R}^n$. The decreasing rearrangement of $x$ is 
denoted by 
$x^{\downarrow} = (x^{\downarrow}_1, x^{\downarrow}_2, \dots, x^{\downarrow}_n)$. 
For $x,y \in {\mathbb R}^n$, the weak majorization $x \prec _w  y$ is defined by 
$$
\sum_{i=1}^k x^{\downarrow}_i \leq \sum_{i=1}^k y^{\downarrow}_i \quad ( \text{ for }  1 \leq k \leq n).
$$
The majorization $x \prec y$ is defined by 
$$
\sum_{i=1}^k x^{\downarrow}_i \leq \sum_{i=1}^k y^{\downarrow}_i \quad  ( \text{ for }  1 \leq k \leq n)
\text{ and } 
\sum_{i=1}^n x^{\downarrow}_i = \sum_{i=1}^n y^{\downarrow}_i  .
$$
For $a,b \in (M_n({\mathbb C}))^+$, 
if $a \leq b$, then $\lambda_i(a) \leq \lambda_i(b)$ for any $i = 1, 2, \dots, n$. 
If $\lambda_i(a) \leq \lambda_i(b)$ for any $i = 1, 2, \dots, n$, then $\lambda(a) \prec_w \lambda(b)$.  
If $\lambda(a) \prec  \lambda(b)$,  then $\lambda(a) \prec _w \lambda(b)$.  See, for example,  \cite{bhatia1}, 
\cite{H}  and \cite{hiaipetz} for majorization theory of matrices.

Moreover  we see that $a \leq b$ if and only if $\varphi(a) \leq \varphi(b)$ for any positive linear 
functional.  It is known that  $\lambda(a) \prec  \lambda(b)$ if and only if $a$ is in the convex hull of the 
unitary orbits of $b$.  We shall consider similar facts for the condition that 
$\lambda_i(a) \leq \lambda_i(b)$ for any $i = 1, 2, \dots, n$. 

\begin{proposition} 
For $a,b \in (M_n({\mathbb C}))^+$, the following conditions are equivalent:
\begin{enumerate}
\item[$(1)$]  $\varphi_{\alpha}(a) \leq \varphi_{\alpha}(b)$
for any non-linear trace $\varphi_{\alpha}$  of Choquet type associated with  
all monotone increasing function $\alpha: \{0,1,2, \dots, n\} \rightarrow [0, \infty)$  with $\alpha(0) = 0$.
\item[$(2)$] $\lambda_i(a) \leq \lambda_i(b)$ for any $i = 1,\dots, n$.
\item[$(3)$] there exists a contraction $c \in M_n({\mathbb C})$ such that $a = cbc^*$.
\end{enumerate}
\end{proposition}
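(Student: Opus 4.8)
The plan is to derive the equivalence $(1)\Leftrightarrow(2)$ for free from the structural results already established, and then to close the loop $(2)\Rightarrow(3)\Rightarrow(2)$. For $(1)\Leftrightarrow(2)$: by the Example above, each coordinate map $\lambda_i$ is itself a non-linear trace of Choquet type $\varphi_\alpha$, coming from a suitable step function $\alpha$, so applying hypothesis $(1)$ to these particular $\alpha$ yields $\lambda_i(a)\le\lambda_i(b)$ for all $i$, i.e.\ $(2)$. Conversely, the Proposition identifying {\rm Ch-T} with the convex cone $\{\sum_{i=1}^n c_i\lambda_i\mid c_i\ge 0\}$ shows that every $\varphi_\alpha$ is a nonnegative combination $\sum_i c_i\lambda_i$, whence $(2)$ forces $\varphi_\alpha(a)=\sum_i c_i\lambda_i(a)\le\sum_i c_i\lambda_i(b)=\varphi_\alpha(b)$, which is $(1)$.

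For $(3)\Rightarrow(2)$ I would write $b=b^{1/2}b^{1/2}$ and set $x=cb^{1/2}$, so that $a=cbc^*=xx^*$ while $x^*x=b^{1/2}c^*cb^{1/2}\le b^{1/2}b^{1/2}=b$ because $c^*c\le I$ for a contraction $c$. Since $x$ is a square matrix, $xx^*$ and $x^*x$ have the same characteristic polynomial, so $\lambda_i(a)=\lambda_i(xx^*)=\lambda_i(x^*x)$ for every $i$; the mini-max principle applied to $x^*x\le b$ then gives $\lambda_i(a)\le\lambda_i(b)$, which is $(2)$.

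For $(2)\Rightarrow(3)$, the only construction of substance, I would diagonalize $a=u\,\mathrm{diag}(\lambda_1(a),\dots,\lambda_n(a))\,u^*$ and $b=v\,\mathrm{diag}(\lambda_1(b),\dots,\lambda_n(b))\,v^*$ with $u,v$ unitary and eigenvalues in decreasing order, so that the two spectra are matched coordinatewise. Put $d=\mathrm{diag}(d_1,\dots,d_n)$ with $d_i=\sqrt{\lambda_i(a)/\lambda_i(b)}$ if $\lambda_i(b)>0$ and $d_i=0$ otherwise; since $(2)$ forces $\lambda_i(a)=0$ whenever $\lambda_i(b)=0$, in every case $d_i\in[0,1]$ and $d_i^2\lambda_i(b)=\lambda_i(a)$. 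Then $c:=u\,d\,v^*$ is a contraction, with $\|c\|=\max_i d_i\le 1$, and $cbc^*=u\,d\,\mathrm{diag}(\lambda_i(b))\,d\,u^*=u\,\mathrm{diag}(d_i^2\lambda_i(b))\,u^*=u\,\mathrm{diag}(\lambda_i(a))\,u^*=a$, which is $(3)$.

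I do not anticipate a genuine obstacle: the whole argument is elementary once the convex-cone description of {\rm Ch-T} is in hand. The only points needing care are the treatment of the zero eigenvalues in $(2)\Rightarrow(3)$, where the entries of $d$ of the form $0/0$ are set to $0$; the coordinatewise matching of the two decreasing spectra, which is exactly what hypothesis $(2)$ is phrased to guarantee; and the standard fact, used in $(3)\Rightarrow(2)$, that $xx^*$ and $x^*x$ are isospectral with multiplicities for a square matrix $x$.
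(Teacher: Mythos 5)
Your proof is correct and follows essentially the same route as the paper: $(1)\Leftrightarrow(2)$ via the cone description of {\rm Ch-T}, $(2)\Rightarrow(3)$ by diagonalizing both matrices and sandwiching with a diagonal contraction, and $(3)\Rightarrow(2)$ by eigenvalue monotonicity. The only (minor, welcome) differences are that you justify $(3)\Rightarrow(2)$ self-containedly via the isospectrality of $xx^*$ and $x^*x$ for $x=cb^{1/2}$ rather than quoting the inequality $\lambda_i(cbc^*)\le\|c\|\,\lambda_i(b)\,\|c^*\|$, and that you treat the zero-eigenvalue case in $(2)\Rightarrow(3)$ explicitly, which the paper glosses over.
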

\begin{proof}
(1)$\Rightarrow$(2)  It is clear, because there exists 
$\alpha = (0,\dots,0,1,1,\dots,1)$ with $\varphi_{\alpha}(a) = \lambda_i(a) $. 

(2)$\Rightarrow$(1)  Since any  non-linear trace $\varphi_{\alpha}$  of Choquet type 
is positively spanned by $\lambda_i$ for $i = 1,\dots, n$, 
(2) implies that $\varphi_{\alpha}(a) \leq \varphi_{\alpha}(b)$. 

(2)$\Rightarrow$(3)  Assume that $\lambda_i(a) \leq \lambda_i(b)$ for any $i = 1, 2, \dots, n$. 
Then there exist constants $d_i$ with $0 \leq d_i \leq 1$ such that $\lambda_i(a) = d_i\lambda_i(b)d_i$. 
Let $d = {\rm diag}(d_1,d_2,\dots,d_n)$ be a daigonal matrix. By diagonalization, there exist 
unitaries $u$ and $v$ in  $M_n({\mathbb C})$ such that 
$uau^* = {\rm diag}(\lambda_1(a),\lambda_2(a),\dots,$ $ \lambda_n(a))$ 
and
$ vbv^* = {\rm diag}(\lambda_1(b),\lambda_2(b),\dots, \lambda_n(b))$.
We have  $uau^* = dvbv^*d^*$. Then $ c:= u^*dv$ is a contraction and $a = cbc^*$. 

(3)$\Rightarrow$(2)  Assume that there exists a contraction $c \in M_n({\mathbb C})$ such that $a = cbc^*$. 
Then,  for any $i = 1,2, \dots, n$,
$$
\lambda_i(a) \leq \| c \|  \lambda_i(b) \| c^* \| \leq \lambda_i(b). 
$$
\end{proof}

If ${\rm Tr}$ is the usual linear trace, then $||a||_1:= {\rm Tr}(|a|)$ is a unitary invariant norm of 
$a \in M_n({\mathbb C})$.  We shall replace 
the usual linear trace by non-linear traces of Choquet type.  

\begin{definition} \rm
 Let $\varphi = \varphi_{\alpha}$ be  a non-linear trace of Choquet type associated with  
a monotone increasing function $\alpha: \{0,1,2, \dots, n\} \rightarrow [0, \infty)$  with $\alpha(0) = 0$. 
Define $|||a|||_{\alpha}:= \varphi_{\alpha}(|a|)$ for $a \in M_n({\mathbb C})$. Since $\varphi_{\alpha}$ 
is unitarily invariant, $|||uav|||_{\alpha} = |||a|||_{\alpha}$ for any unitaries $u$ and $v$. 
\end{definition}

\begin{proposition}In the above setting, assume that $\alpha(1) > 0$.  
If $||| \ |||_{\alpha}$ is a unitary invariant norm, then $\alpha(i+1) + \alpha(i-1) \leq 2\alpha(i)$ 
for $i = 1,2,\dots, n-1$.
\end{proposition}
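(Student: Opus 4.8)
The plan is to exploit only the triangle inequality of $|||\cdot|||_{\alpha}$, evaluated on a carefully chosen pair of rank-$i$ projections. Writing, as before, $c_1 = \alpha(1)$ and $c_j = \alpha(j) - \alpha(j-1)$ for $2 \le j \le n$, Abel summation rewrites the defining formula as $\varphi_{\alpha}(a) = \sum_{j=1}^{n} c_j\,\lambda_j(a)$, and since $\alpha(i+1) + \alpha(i-1) - 2\alpha(i) = c_{i+1} - c_i$, the assertion for a given $i \in \{1,\dots,n-1\}$ is exactly $c_{i+1} \le c_i$.

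Fix an orthonormal basis $e_1,\dots,e_n$ of $\mathbb{C}^n$, and for fixed $i$ let $p$ be the orthogonal projection onto $\mathrm{span}\{e_1,\dots,e_{i-1},e_i\}$ and $q$ the orthogonal projection onto $\mathrm{span}\{e_1,\dots,e_{i-1},e_{i+1}\}$ (for $i=1$ these are the projections onto $\mathbb{C}e_1$ and $\mathbb{C}e_2$). Both $p$ and $q$ have rank $i$, hence eigenvalue list $(1,\dots,1,0,\dots,0)$ with $i$ ones, so — using $p,q \ge 0$, whence $|p| = p$, $|q| = q$ — we get $|||p|||_{\alpha} = \varphi_{\alpha}(p) = \sum_{j=1}^{i}c_j = \alpha(i)$ and likewise $|||q|||_{\alpha} = \alpha(i)$. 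On basis vectors one checks that $p+q = 2r' + r''$, where $r'$ is the projection onto $\mathrm{span}\{e_1,\dots,e_{i-1}\}$ and $r''$ the projection onto $\mathrm{span}\{e_i,e_{i+1}\}$ (with $r' = 0$ when $i=1$); hence $p+q$ has eigenvalue list consisting of $2$ with multiplicity $i-1$, then $1$ with multiplicity $2$, then $0$ with multiplicity $n-i-1$, so
\[
|||p+q|||_{\alpha} = \varphi_{\alpha}(p+q) = 2\sum_{j=1}^{i-1}c_j + c_i + c_{i+1} = 2\alpha(i-1) + \bigl(\alpha(i+1) - \alpha(i-1)\bigr) = \alpha(i-1) + \alpha(i+1).
\]
Applying the triangle inequality $|||p+q|||_{\alpha} \le |||p|||_{\alpha} + |||q|||_{\alpha}$ then yields $\alpha(i-1) + \alpha(i+1) \le 2\alpha(i)$, which is the claim for this $i$; letting $i$ range over $\{1,\dots,n-1\}$ finishes the proof.

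There is no genuine analytic difficulty here: only subadditivity of the norm is used, and the hypothesis $\alpha(1) > 0$ plays no role in this direction (it merely records non-degeneracy that is anyway forced once $|||\cdot|||_{\alpha}$ is a bona fide norm). The only points deserving attention are bookkeeping ones — checking the identity $p+q = 2r'+r''$ on basis vectors, and handling the boundary cases $i=1$ (where $r'=0$) and $i=n-1$ (where the $0$-eigenspace of $p+q$ is trivial) — so I would simply remark that these cases are covered by the same computation with the appropriate empty blocks.
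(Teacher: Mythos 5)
Your proof is correct and is essentially the paper's own argument: the paper likewise applies the triangle inequality to a matrix with eigenvalue list $(2^{i-1},1,1,0^{n-i-1})$ written as the sum of two rank-$i$ projections overlapping in a rank-$(i-1)$ subspace, obtaining $\alpha(i+1)+\alpha(i-1)=|||p+q|||_{\alpha}\le 2\alpha(i)$. The only differences are cosmetic (a relabeling of which basis vectors carry the shared block, and your uniform treatment of the boundary case $i=1$, which the paper handles separately).
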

\begin{proof}
Let $I = p_1 + p_2 + \dots + p_n$ be a resolution of the identity by minimal projections. 
For  $i = 2,\dots, n-1$, let  
$$
a = p_1 + p_2 + 2p_3 + 2p_4 + \dots + 2p_{i+1}
$$
Then 
\begin{align*}
\alpha(i+1) + \alpha(i-1) &  =  \varphi_{\alpha}(a)
 =||| p_1 + p_2 + 2p_3 + 2p_4 + \dots + 2p_{i+1}|||_{\alpha} \\
 & \leq  
|||p_1+ p_3 + \dots p_{i+1}|||_{\alpha} + |||p_2+ p_3 + \dots p_{i+1}|||_{\alpha} \\
& = \alpha(i) + \alpha(i) = 2\alpha(i).
\end{align*}
For $i = 1$, we have that 
$$
\alpha(2) + \alpha(0) = \alpha(2) = ||| p_1 + p_2|||_{\alpha} \le |||p_1|||_{\alpha} +|||p_2|||_{\alpha} = 2\alpha(1).
$$
\end{proof}

\begin{example} \rm
If $\alpha = (0,1,2,\dots,k,k,\dots,k)$, then $\varphi_{\alpha}(a) =  \sum_{i=1} ^{k}  \lambda_i(a)$. 
Therefore  
$$
|||a|||_{\alpha} = \sum_{i=1} ^{k}  \lambda_i(|a|) = \sum_{i=1} ^{k}  s_i(a)
$$
gives a Ky Fan norm, where  $s_i(a) :=  \lambda_i(|a|)$ is the $i$-th singular value of $a \in M_n({\mathbb C})$. 
\end{example}.

It is known that any unitary invariant norm on matrix algebras gives a 2-positive map 
as in \cite{L}  and \cite[Theorem IX.5.10]{bhatia1}.  
We shall study when 
$|||a|||_{\alpha}$ is a unitary invariant norm for a monotone increasing function $\alpha: \{0,1,2, \dots, n\} \rightarrow [0, \infty)$  with $\alpha(0) = 0$. 

The following lemma is known for matrix algebras and the algebra $B(H)$  of all bounded linear operators on a Hilbert space $H$, 
for example, see \cite{Foias-Frazho}       , 
\begin{lemma} 
Let $A$ be a von Neumann algebra. For operators $a,b,c \in A$, 
the following conditions are equivalent:
\begin{enumerate}
\item[$(1)$]  the $2 \times  2$ operator matrix
$\begin{pmatrix}
     a & c\\
     c^* & b
\end{pmatrix}  \geq 0. $
\item[$(2)$] there exists a contraction $k \in A$ such that $c = a^{1/2}kb^{1/2}$.
\end{enumerate}
\end{lemma}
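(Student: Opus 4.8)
The plan is to prove the two implications separately: $(2)\Rightarrow(1)$ is an elementary factorization, while $(1)\Rightarrow(2)$ is the substantive direction, which I would obtain from the polar decomposition of suitable corners of $M^{1/2}$ inside the von Neumann algebra $M_2(A)$.

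For $(2)\Rightarrow(1)$, suppose $c=a^{1/2}kb^{1/2}$ with $\|k\|\le 1$. First I would record the standard fact that $\begin{pmatrix}1&k\\k^*&1\end{pmatrix}\ge 0$, which follows from the congruence
\[
\begin{pmatrix}1&k\\k^*&1\end{pmatrix}=\begin{pmatrix}1&k\\0&1\end{pmatrix}^{*}\begin{pmatrix}1&0\\0&1-k^*k\end{pmatrix}\begin{pmatrix}1&k\\0&1\end{pmatrix}
\]
together with $1-k^*k\ge 0$. Since $a,b\ge 0$, conjugating this positivity by the self-adjoint matrix $\begin{pmatrix}a^{1/2}&0\\0&b^{1/2}\end{pmatrix}$ gives
\[
\begin{pmatrix}a^{1/2}&0\\0&b^{1/2}\end{pmatrix}\begin{pmatrix}1&k\\k^*&1\end{pmatrix}\begin{pmatrix}a^{1/2}&0\\0&b^{1/2}\end{pmatrix}=\begin{pmatrix}a&a^{1/2}kb^{1/2}\\b^{1/2}k^*a^{1/2}&b\end{pmatrix}=\begin{pmatrix}a&c\\c^*&b\end{pmatrix},
\]
so the block matrix in $(1)$ is positive.

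For $(1)\Rightarrow(2)$, set $M=\begin{pmatrix}a&c\\c^*&b\end{pmatrix}$ and regard it as a positive element of the von Neumann algebra $M_2(A)$; compressing to the two corners shows $a,b\ge 0$. Let $P_1,P_2\in M_2(A)$ be the diagonal matrix units. From $(M^{1/2}P_1)^{*}(M^{1/2}P_1)=P_1MP_1=\begin{pmatrix}a&0\\0&0\end{pmatrix}$ one gets $|M^{1/2}P_1|=\begin{pmatrix}a^{1/2}&0\\0&0\end{pmatrix}$, and likewise $|M^{1/2}P_2|=\begin{pmatrix}0&0\\0&b^{1/2}\end{pmatrix}$. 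Take polar decompositions $M^{1/2}P_j=V_j|M^{1/2}P_j|$ with partial isometries $V_j\in M_2(A)$, using that the polar decomposition of an element of a von Neumann algebra stays inside it. Then
\[
\begin{pmatrix}0&c\\0&0\end{pmatrix}=P_1MP_2=(M^{1/2}P_1)^{*}(M^{1/2}P_2)=\begin{pmatrix}a^{1/2}&0\\0&0\end{pmatrix}\,(V_1^{*}V_2)\,\begin{pmatrix}0&0\\0&b^{1/2}\end{pmatrix}.
\]
Writing $V_1^{*}V_2=(k_{ij})_{i,j=1,2}$ — a contraction, since it is a product of partial isometries — and comparing the $(1,2)$-entries yields $c=a^{1/2}k_{12}b^{1/2}$ with $k_{12}\in A$ and $\|k_{12}\|\le\|V_1^{*}V_2\|\le 1$, which is $(2)$.

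The main obstacle is purely the operator-algebraic bookkeeping in $(1)\Rightarrow(2)$: one has to make sure that $M^{1/2}$, the corner projections $P_j$ and the partial isometries $V_j$ all lie in $M_2(A)$ — this is exactly where the hypothesis that $A$ is a von Neumann algebra enters, so that $M_2(A)$ is closed under continuous functional calculus and polar decomposition — and one must check that the block products collapse to the single identity $c=a^{1/2}k_{12}b^{1/2}$. Everything else reduces to routine $2\times 2$ matrix multiplication.
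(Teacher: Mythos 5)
Your proof is correct, and it is actually more than the paper provides: the paper's ``proof'' of this lemma is the single sentence that ``the usual proof gives that we can choose $k$ in the von Neumann algebra $A$,'' with a pointer to the literature. Your $(2)\Rightarrow(1)$ congruence argument is the standard one. For $(1)\Rightarrow(2)$ you take the polar-decomposition route inside $M_2(A)$, using $P_1MP_2=(M^{1/2}P_1)^*(M^{1/2}P_2)$ and $|M^{1/2}P_j|=\mathrm{diag}(a^{1/2},0)$, $\mathrm{diag}(0,b^{1/2})$; all steps check out (the only mildly delicate points --- that polar decompositions of elements of the von Neumann algebra $M_2(A)$ have their partial isometries in $M_2(A)$, and that $\|k_{12}\|\le\|V_1^*V_2\|\le 1$ because a corner compression does not increase the norm --- are handled correctly). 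The alternative ``usual'' route, which the paper itself uses later in the $C^*$-algebra setting, is to treat the invertible case first via $k=a^{-1/2}cb^{-1/2}$ and then pass to general $a,b$ by perturbing to $a+\varepsilon I$, $b+\varepsilon I$ and taking a weak-$*$ limit of the resulting contractions; that argument is shorter in the invertible case but needs the limiting step (and weak-$*$ compactness of the unit ball) precisely where your argument needs nothing, since the polar decomposition directly exhibits $k\in A$ with no approximation. So your version is a legitimate, self-contained substitute for the proof the paper omits.
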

\begin{proof}
The usual proof  gives that we can choose $k$ in the von Neumann algebra $A$. 
\end{proof}

\begin{remark} \rm  The above Lemma 
does not hold for a general $C^*$-algebra $A$. For example, take 
$A = C[0,1]$ be the $C^*$-algebra of the continuous functions on $[0,1]$. Put $a,b,c \in C[0,1]$ by 
 $a(x) =b(x) =  x$  for $x \in [0,1]$ and $c(x) = x|\sin {\frac{1}{x}}|$ for  $x \in (0,1]$ and $c(0) = 0.$
 Then (1) holds but (2) does not hold.  In fact, on the contrary suppose that  
 there were a contraction $k \in C[0,1]$ such that $c = a^{1/2}kb^{1/2}$.  Then 
$k(x) = |\sin {\frac{1}{x}}|$ for  $x \in (0,1]$ and this contradicts that $k$ is continuous on $[0,1]$. 
\end{remark}

The following proposition is known for matrix algebras $M_n({\mathbb C})$ 
and the algebra $B(H)$  of bounded linear operators on a Hilbert space $H$ 
as in \cite{L}  and \cite[Theorem IX.5.10]{bhatia1}.  Using the Lemma above, we can prove it for a general von Neumann algebra essentially by the same proof as follows:

\begin{proposition} 
\label{prop:2-positive-W*}
Let $\varphi : A \rightarrow  {\mathbb C}$ be a non-linear functional on a von Neumann algebra A.  
Then the followings are equivalent:
\begin{enumerate}
\item[$(1)$]  $\varphi$ is 2-positive, that is, 
$\begin{pmatrix}
     \varphi(a) & \varphi(c)\\
     \varphi(c^*) & \varphi(b)
\end{pmatrix} \geq 0$.
for any $a,b,c \in A$ with 
$\begin{pmatrix}
     a & c\\
     c^* & b
\end{pmatrix}
\geq 0$.
\item[$(2)$] $\varphi $ satisfies the following four conditions {\rm (i), (ii), (iii)} and {\rm (iv)}: 
  \begin{enumerate}
    \item[(i)] $\varphi $ is positive, that is, $\varphi(a) \geq 0$ for any $a \in A^+$. 
    \item[(ii)] $\varphi $ is monotone on $A^+$, that is, if $a \leq b$, then 
$\varphi(a) \leq \varphi(b)$ for any $a,b \in  A^+$. 
    \item[(iii)] $\varphi $ is $*$-preserving, that is, $\varphi(a^*) =  \varphi(a)^*$ for any $a \in A$. 
    \item[(iv)] $\varphi $ satisfies Schwartz inequality, that is, 
\[  |\varphi(a^*b) |^2 \leq \varphi(a^*a) \varphi(b^*b)  \text{ for any } a,b \in A.  \] 
   \end{enumerate}
\end{enumerate}
\end{proposition}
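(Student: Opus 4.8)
The plan is to prove the two implications separately, reducing each to elementary manipulations with $2\times 2$ positive semidefinite scalar matrices, and invoking only the operator-matrix factorization Lemma stated just above (which is where the von Neumann algebra hypothesis is genuinely used, since, as the preceding Remark shows, it can fail for general $C^*$-algebras). Throughout I would use the elementary fact that a $2\times 2$ complex matrix is positive semidefinite if and only if it is Hermitian, has nonnegative diagonal entries, and has nonnegative determinant.

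For $(1)\Rightarrow(2)$ the idea is to feed 2-positivity a suitable positive $2\times 2$ operator matrix for each of the four conclusions. Applying 2-positivity to $\begin{pmatrix} a & 0 \\ 0 & 0\end{pmatrix}\ge 0$ for $a\in A^+$ and reading off the $(1,1)$-entry gives (i). For $a\in A$ the matrix $\begin{pmatrix} \|a\|\unit & a \\ a^* & \|a\|\unit\end{pmatrix}$ is positive (its Schur complement is $\|a\|\unit-\|a\|^{-1}a^*a\ge 0$; the case $a=0$ is trivial), so its image under $\varphi$ is a Hermitian scalar matrix, whence $\varphi(a^*)=\overline{\varphi(a)}$, which is (iii). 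For (ii), if $0\le a\le b$ I would use the identity
\[
\begin{pmatrix} a & a \\ a & b\end{pmatrix} = \begin{pmatrix} a^{1/2} \\ a^{1/2}\end{pmatrix}\begin{pmatrix} a^{1/2} & a^{1/2}\end{pmatrix} + \begin{pmatrix} 0 & 0 \\ 0 & b-a\end{pmatrix}\ge 0,
\]
so that 2-positivity together with the determinant test gives $\varphi(a)\varphi(b)-\varphi(a)^2\ge 0$; combined with $\varphi(a)\ge 0$ from (i) this forces $\varphi(a)\le\varphi(b)$. Finally, applying 2-positivity to $\begin{pmatrix} a^* \\ b^*\end{pmatrix}\begin{pmatrix} a & b\end{pmatrix}=\begin{pmatrix} a^*a & a^*b \\ b^*a & b^*b\end{pmatrix}\ge 0$ and taking the determinant of the image yields the Schwarz inequality (iv).

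For $(2)\Rightarrow(1)$, suppose $\begin{pmatrix} a & c \\ c^* & b\end{pmatrix}\ge 0$. By the Lemma there is a contraction $k\in A$ with $c=a^{1/2}kb^{1/2}$. The key move is to set $x:=k^*a^{1/2}$ and $y:=b^{1/2}$, so that $x^*y=c$, $y^*y=b$, and $x^*x=a^{1/2}(kk^*)a^{1/2}\le a$ because $kk^*\le\unit$. Now (i) and (iii) make $\begin{pmatrix} \varphi(a) & \varphi(c) \\ \varphi(c^*) & \varphi(b)\end{pmatrix}$ a Hermitian scalar matrix with nonnegative diagonal, so by the determinant test it suffices to show $|\varphi(c)|^2\le\varphi(a)\varphi(b)$; but by Schwarz (iv) and then monotonicity (ii),
\[
|\varphi(c)|^2 = |\varphi(x^*y)|^2 \le \varphi(x^*x)\,\varphi(y^*y) \le \varphi(a)\,\varphi(b).
\]
This proves (1).

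I expect the choice $x=k^*a^{1/2}$, $y=b^{1/2}$ in the second implication to be the crux: it is arranged precisely so that contractivity of $k$ converts into $x^*x\le a$, letting monotonicity be chained onto the Schwarz inequality; a less careful factorization of $c$ would not produce the bound $\varphi(a)\varphi(b)$. Everything else — the matrix identity in the monotonicity step and the various $2\times 2$ determinant computations — is routine, and, notably, no continuity or linearity of $\varphi$ is used at any point.
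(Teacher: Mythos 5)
Your proof is correct and follows essentially the same route as the paper: the forward direction tests 2-positivity on the same (or interchangeable) standard positive $2\times 2$ operator matrices, and the converse uses the factorization $c=a^{1/2}kb^{1/2}$ from the preceding Lemma combined with the Schwarz inequality and monotonicity. The only cosmetic difference is that you absorb the contraction into the left factor ($x=k^*a^{1/2}$, giving $x^*x\le a$) whereas the paper absorbs it into the right factor ($y=kb^{1/2}$, giving $y^*y\le b$); both choices work equally well.
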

\begin{proof} 
(1)$\Rightarrow$(2)  For $a,b \in A$, $0 \leq a \leq b$ if and only if 
$\begin{pmatrix}   
    a & a\\
     a & b
\end{pmatrix}
\geq 0
$. 
Therefore (i) and (ii) is trivial.  
Since for any $a \in A$, 
$\begin{pmatrix}
     I & a\\
     a^*& a^*a
\end{pmatrix}
\geq 0$,  we have  
$\begin{pmatrix}
    \varphi(I) & \varphi(a)\\
     \varphi(a^*)& \varphi(a^*a)
\end{pmatrix}
\geq 0$. Thus $\varphi(a^*) =  \varphi(a)^*$ and (iii) holds. \\
For $a,b \in A$, 
$$
\begin{pmatrix}
     a^*a & a^*b\\
     b^*a & b^*b
\end{pmatrix}
= \begin{pmatrix}
     a^* & 0\\
     b^*& 0
\end{pmatrix}
\begin{pmatrix}
     a & b\\
     0 & 0
\end{pmatrix}
\geq 0. 
$$
Since its determinant is positive, (iv)$|\varphi(a^*b) |^2 \leq \varphi(a^*a) \varphi(b^*b)$ is proved. 

(2)$\Rightarrow$(1) Suppose that 
$\begin{pmatrix}
     a & c\\
     c^* & b
\end{pmatrix}
\geq 0
$
for $a,b,c \in A$. Then there exists a contraction $k \in A$ such that $c = a^{1/2}kb^{1/2}$.
By (i), $\varphi(a) \geq 0$ and $ \varphi(b) \geq 0$. By (iii), $\varphi(c^*) = \varphi(c)^*$. 
By (vi) and (ii), we have 
$$
|\varphi(c)|^2  = |\varphi(a^{1/2}kb^{1/2})|^2 \leq 
\varphi(a^{1/2}a^{1/2})\varphi(b^{1/2}k^*kb^{1/2})\leq \varphi(a) \varphi(b). 
$$
Therefore 
$\begin{pmatrix}
    \varphi(a) & \varphi(c) \\
     \varphi(c^*)& \varphi(b)
\end{pmatrix}
\geq 0
$.
\end{proof}

For a general $C^*$-algebra, we need an additional assumptions for the moment.  We do not know whether
we need the operator norm  continuity of $\varphi$.

\begin{proposition} 
\label{prop:2-positive-C*}
Let $\varphi : A \rightarrow  {\mathbb C}$ be a non-linear functional on a unital $C^*$-algebra $A$. 
Assume that $\varphi$ is operator norm continuous. 
Then the followings are equivalent:
\begin{enumerate}
\item[$(1)$]  $\varphi$ is 2-positive.
\item[$(2)$] $\varphi $ satisfies the following conditions: 
  \begin{enumerate}
  \item[(i)]  $\varphi $ is positive, that is, $\varphi(a) \geq 0$ for any $a \in A^+$. 
  \item[(ii)] $\varphi $ is monotone on $A^+$, that is, if $a \leq b$, then 
$\varphi(a) \leq \varphi(b)$ for any $a,b \in  A^+$. 
  \item[(iii)]  $\varphi $ is $*$-preserving, that is, $\varphi(a^*) =  \varphi(a)^*$ for any $a \in A$.  
  \item[(iv)]  $\varphi $ satisfies Schwartz inequality, that is, 
\[   |\varphi(a^*b) |^2 \leq \varphi(a^*a) \varphi(b^*b)   \text{ for any } a,b \in A.    \]
  \end{enumerate}
\end{enumerate}
\end{proposition}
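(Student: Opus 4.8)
The plan is to follow the proof of Proposition~\ref{prop:2-positive-W*} as closely as possible, and to bring in the hypothesis of operator norm continuity precisely at the one place where the von Neumann algebra argument invoked the factorization Lemma (which, as the Remark shows, genuinely fails for general $C^*$-algebras).

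First, for the implication (1)$\Rightarrow$(2) I would simply repeat the argument given for Proposition~\ref{prop:2-positive-W*} verbatim. That part only uses the positivity, in $M_2(A)$, of the block matrices $\begin{pmatrix} a & a \\ a & b \end{pmatrix}$ (for $0 \le a \le b$), $\begin{pmatrix} 1 & a \\ a^* & a^*a \end{pmatrix}$, and $\begin{pmatrix} a^*a & a^*b \\ b^*a & b^*b \end{pmatrix}$, all of which make sense and are positive in any unital $C^*$-algebra, and it never appeals to any factorization. No continuity of $\varphi$ is needed here.

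For (2)$\Rightarrow$(1), suppose $\begin{pmatrix} a & c \\ c^* & b \end{pmatrix} \ge 0$ in $A$. The remedy for the missing Lemma is a standard $\varepsilon$-regularization carried out inside $A$. For $\varepsilon > 0$ the matrix $\begin{pmatrix} a+\varepsilon & c \\ c^* & b+\varepsilon \end{pmatrix}$ is positive with invertible diagonal entries, and conjugating it by $\mathrm{diag}\bigl((a+\varepsilon)^{-1/2},(b+\varepsilon)^{-1/2}\bigr)$ yields $\begin{pmatrix} 1 & k_\varepsilon \\ k_\varepsilon^* & 1 \end{pmatrix} \ge 0$, where $k_\varepsilon := (a+\varepsilon)^{-1/2} c (b+\varepsilon)^{-1/2} \in A$; hence $\|k_\varepsilon\| \le 1$ and $c = (a+\varepsilon)^{1/2} k_\varepsilon (b+\varepsilon)^{1/2}$. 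Now, exactly as in Proposition~\ref{prop:2-positive-W*}, applying the Schwarz inequality (iv) to $(a+\varepsilon)^{1/2}$ and $k_\varepsilon (b+\varepsilon)^{1/2}$, and then monotonicity (ii) together with $\|k_\varepsilon\| \le 1$ to the estimate $(b+\varepsilon)^{1/2} k_\varepsilon^* k_\varepsilon (b+\varepsilon)^{1/2} \le b+\varepsilon$, gives
$$
|\varphi(c)|^2 \le \varphi(a+\varepsilon)\,\varphi(b+\varepsilon).
$$
Letting $\varepsilon \to 0$, we have $a+\varepsilon \to a$ and $b+\varepsilon \to b$ in operator norm, so operator norm continuity of $\varphi$ forces the right-hand side to converge to $\varphi(a)\varphi(b)$; thus $|\varphi(c)|^2 \le \varphi(a)\varphi(b)$. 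Combining this with $\varphi(a),\varphi(b) \ge 0$ from (i) and $\varphi(c^*) = \varphi(c)^*$ from (iii) shows that $\begin{pmatrix} \varphi(a) & \varphi(c) \\ \varphi(c^*) & \varphi(b) \end{pmatrix}$ has nonnegative diagonal entries and nonnegative determinant, hence is positive.

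The only genuinely new ingredient compared with the von Neumann algebra case is this $\varepsilon$-regularization producing an approximate factorization \emph{inside} $A$, and the step I expect to require care is the passage to the limit $\varepsilon \to 0$ — which is exactly where operator norm continuity of $\varphi$ is consumed; the Remark makes clear that some extra hypothesis of this kind is unavoidable. An alternative route would be to pass to the enveloping von Neumann algebra $A^{**}$ and invoke Proposition~\ref{prop:2-positive-W*}, but that would require extending $\varphi$ to $A^{**}$ while preserving all four properties, which is not obviously available for a non-linear functional; the perturbation argument stays within $A$ and is cleaner.
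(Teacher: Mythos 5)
Your proof is correct and follows essentially the same route as the paper: the paper likewise handles (1)$\Rightarrow$(2) exactly as in the von Neumann algebra case, and for (2)$\Rightarrow$(1) it first treats invertible $a,b$ via the conjugation by $\mathrm{diag}(a^{-1/2},b^{-1/2})$ to produce the contraction $k$ inside $A$, then passes to $a+\varepsilon I$, $b+\varepsilon I$ and uses operator norm continuity of $\varphi$ to conclude. Your version merely merges these two steps by working with the perturbed matrix from the outset, which is an inessential reorganization.
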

\begin{proof}
The only non-trivial part is (2)$\Rightarrow$(1).  Suppose that 
$\begin{pmatrix}
     a & c\\
     c^* & b
\end{pmatrix}
\geq 0
$
for $a,b,c \in A$.  First assume that $a$ and $b$ are invertible. Then
\begin{align*}
\begin{pmatrix}
     I& a^{-1/2}cb^{-1/2}\\
     b^{-1/2}c^*a^{-1/2} & I
\end{pmatrix}   &   \\
=\begin{pmatrix}
     a^{-1/2} & 0\\
     0& b^{-1/2}
\end{pmatrix}  &
\begin{pmatrix}
     a & c\\
     c^* & b
\end{pmatrix}
\begin{pmatrix}
     a^{-1/2} & 0\\
     0& b^{-1/2}
\end{pmatrix}
\geq 0 .
\end{align*}
Therefore $k:=a^{-1/2}cb^{-1/2}$ is contraction and $c = a^{1/2}kb^{1/2}$. 
By the same proof of Proposition \ref{prop:2-positive-W*} we have that 
$\begin{pmatrix}
    \varphi(a) & \varphi{\rm (c)}\\
     \varphi(c^*)& \varphi(b)
\end{pmatrix}
\geq 0. $
If $a$ and $b$ are not invertible, then replacing them by  $a +\varepsilon I$ and 
$b +\varepsilon I$, the norm continuity of $\varphi$ implies that 
$\begin{pmatrix}
    \varphi(a) & \varphi{\rm (c)}\\
     \varphi(c^*)& \varphi(b)
\end{pmatrix}
\geq 0. $
\end{proof}

\begin{corollary} 
\label{cor:unitary-invariant-norm-2-positive}
Let $||| \ ||| : M_n({\mathbb C}) \rightarrow  {\mathbb C}$ be a unitary invariant norm on $M_n({\mathbb C})$. 
Then $||| \ |||$ is 2-positive. 
\end{corollary}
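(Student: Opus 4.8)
The plan is to deduce Corollary~\ref{cor:unitary-invariant-norm-2-positive} directly from Proposition~\ref{prop:2-positive-C*} (or equally well from Proposition~\ref{prop:2-positive-W*}, since $M_n(\C)$ is a von Neumann algebra) by verifying that a unitary invariant norm $|||\cdot|||$ on $M_n(\C)$ satisfies the four conditions (i)--(iv). The only subtlety is that $|||\cdot|||$ is a priori defined on all of $M_n(\C)$; we must check that, restricted to $(M_n(\C))^+$, it behaves like a positive, monotone functional, and that the Cauchy--Schwarz type inequality (iv) holds. Norm continuity (needed to invoke Proposition~\ref{prop:2-positive-C*}) is automatic, since any norm on the finite-dimensional space $M_n(\C)$ is continuous; alternatively one uses Proposition~\ref{prop:2-positive-W*}, which needs no continuity hypothesis.

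First I would record that $|||\cdot|||$ is nonnegative by definition of a norm, giving (i), and that it is $*$-preserving: writing a unitary invariant norm as a symmetric gauge function of the singular values, one has $s_i(a^*) = s_i(a)$ for all $i$, hence $|||a^*||| = |||a|||$, and since $|||a^*|||$ is real this yields $\varphi(a^*) = \varphi(a)^* = |||a|||$ for $\varphi := |||\cdot|||$; note here $\varphi$ is real-valued, so the $*$-preserving condition just says $\varphi(a^*) = \varphi(a)$. For monotony (ii): if $0 \le a \le b$ in $(M_n(\C))^+$, then by the mini-max principle $\lambda_i(a) \le \lambda_i(b)$ for every $i$, i.e.\ $s_i(a) \le s_i(b)$, and since a symmetric gauge function is monotone increasing in each coordinate on the positive orthant, $|||a||| \le |||b|||$. (This is exactly the content already used for $\varphi_\alpha$ in the proof of the main theorem.)

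The substantive point is the Schwarz inequality (iv), $|\varphi(a^*b)|^2 \le \varphi(a^*a)\,\varphi(b^*b)$ for all $a,b \in M_n(\C)$. Here I would use the standard fact that for any unitary invariant norm, $||| \, |c| \, ||| = ||| c |||$ and the log-majorization / Cauchy--Schwarz for symmetric norms: $s_i(a^*b) \le s_i(a)\,s_i(b)$ after suitable rearrangement is too strong, so instead the cleaner route is via the $2\times 2$ block matrix. Since $\left(\begin{smallmatrix} a^*a & a^*b \\ b^*a & b^*b \end{smallmatrix}\right) \ge 0$, Lemma (the von Neumann algebra version preceding Proposition~\ref{prop:2-positive-W*}) gives a contraction $k$ with $a^*b = (a^*a)^{1/2} k (b^*b)^{1/2}$; then $s_i(a^*b) \le s_i\big((a^*a)^{1/2}(b^*b)^{1/2}\big)$, and by the Cauchy--Schwarz inequality for singular values one has $\sum_i f\big(s_i((a^*a)^{1/2}(b^*b)^{1/2})\big)$ controlled by the corresponding quantities for $a^*a$ and $b^*b$; concretely, for the symmetric gauge function $\Phi$ defining $|||\cdot|||$, $\Phi\big(s(xy)\big) \le \Phi\big(s(x)\big)^{1/2}\Phi\big(s(y)\big)^{1/2}$ when $x,y \ge 0$ — but rather than reprove this I would simply cite \cite[Theorem IX.5.10]{bhatia1} or \cite{L}, which is precisely the assertion that a unitary invariant norm is $2$-positive, OR present the corollary as an immediate consequence of Proposition~\ref{prop:2-positive-C*} once (i)--(iv) are checked, noting that (iv) for unitary invariant norms is the Cauchy--Schwarz inequality for symmetric norms found in \cite[Theorem IX.5.10]{bhatia1}.

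The main obstacle is purely expository: deciding whether to re-derive the symmetric-norm Cauchy--Schwarz inequality (iv) from scratch or to cite it. Given that the paper already references \cite{L} and \cite[Theorem IX.5.10]{bhatia1} for exactly this $2$-positivity statement, the honest and efficient proof is one line: a unitary invariant norm is positive, monotone on $(M_n(\C))^+$, $*$-preserving, and satisfies the Schwarz inequality (the last by \cite[Theorem IX.5.10]{bhatia1}), hence is $2$-positive by Proposition~\ref{prop:2-positive-C*} (using that every norm on $M_n(\C)$ is norm continuous). I would write it that way, so the corollary becomes a clean application of the preceding proposition rather than a repetition of known matrix-norm estimates.
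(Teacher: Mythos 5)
Your proposal is correct and follows essentially the same route as the paper: the paper's proof is exactly the one-line version you settle on, namely that a unitary invariant norm satisfies conditions (i)--(iv) of Proposition~\ref{prop:2-positive-W*} by \cite[IX.5]{bhatia1}, hence is 2-positive. Your observation that one should invoke the von Neumann algebra version rather than worry about norm continuity is also the right call, since $M_n(\mathbb{C})$ is a von Neumann algebra.
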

\begin{proof}
It is known that any unitary invariant norm $||| \ |||$  on $M_n({\mathbb C})$ satisfies the condition (2) in 
Proposition \ref{prop:2-positive-W*} as in \cite[IX.5]{bhatia1}.  Therefore $||| \ |||$ is 2-positive. 
\end{proof}

For example, the Ky Fan $k$-norms defined as $|| a ||_{(k)} := s_1(a) + s_2(a) + \dots + s_k(a)$ is 2-positive 
and $s_1$ is also 2-positive but 
$s_2 : M_3({\mathbb C}) \rightarrow  {\mathbb C}$  is not 2-positive.  In fact, consider the diagonal 
matrices $a ={\rm diag}(1,1,3), b = {\rm diag}(3,1,1), c = {\rm diag}(\sqrt{3},1,\sqrt{3})$ in $M_3({\mathbb C})$. 
Then $
\begin{pmatrix}
     a & c\\
     c^* & b
\end{pmatrix}
\geq 0 $ and 
$$\begin{pmatrix}
    s_2(a) & s_2(c) \\
     s_2(c^*)& s_2(b)
\end{pmatrix}
=  \begin{pmatrix}
     1 & \sqrt{3}\\
     \sqrt{3} & 1
\end{pmatrix}
$$
is not positive.  We shall extend this example that $s_2$  is not 2-positive as follows:

\begin{proposition} 
\label{prop:not-2-positive}
Let   $\varphi = \sum _{i=1}^n  c_i \lambda_i$ with $c_i \in [0,\infty), i=1, \dots, n $.  Define 
 $s(a)= \varphi(|a|)$ for $a \in M_n({\mathbb C})$.  If $c_k <c_{k+1}$ for some $k$, then 
 $s$ is not 2-positive. 
\end{proposition}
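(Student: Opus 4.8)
The plan is to generalize the explicit counterexample for $s_2$ on $M_3(\mathbb C)$ by finding, for the given index $k$ with $c_k < c_{k+1}$, positive diagonal matrices $a,b$ and a diagonal $c$ such that the $2\times 2$ operator matrix $\begin{pmatrix} a & c \\ c^* & b\end{pmatrix}$ is positive but $\begin{pmatrix} s(a) & s(c) \\ s(c^*) & s(b)\end{pmatrix}$ fails to be positive, i.e.\ $s(c)^2 > s(a)s(b)$. Working with diagonal matrices is the right reduction: if $a = \mathrm{diag}(a_1,\dots,a_n)$, $b = \mathrm{diag}(b_1,\dots,b_n)$, $c = \mathrm{diag}(c_1,\dots,c_n)$ with all $a_i, b_i \ge 0$, then positivity of the block matrix is equivalent to $|c_i|^2 \le a_i b_i$ for every $i$ (each $2\times 2$ scalar block $\begin{pmatrix} a_i & c_i \\ \bar c_i & b_i\end{pmatrix}$ must be positive and these are independent). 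In particular, choosing $c_i = \sqrt{a_i b_i}$ is allowed. Since $a,b$ are positive diagonal, $|a| = a$, $|b| = b$, $|c| = \mathrm{diag}(\sqrt{a_ib_i})$, and $s(a) = \varphi(a)$, $s(b) = \varphi(b)$, $s(c) = \varphi(\mathrm{diag}(\sqrt{a_ib_i}))$, where $\varphi$ sorts the entries in decreasing order and pairs them against the coefficients $(c_1,\dots,c_n)$.

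The construction itself mimics the $k=2$, $n=3$ case. Put the "large" value on coordinate $k+1$ of $a$ and on coordinate $k$ of $b$, and put the "small" value elsewhere; more precisely, pick parameters $t > s > 0$ and set $a = \mathrm{diag}(s,\dots,s,\underbrace{t}_{k+1},s,\dots,s)$ with the single entry $t$ in position $k+1$ and all other entries $s$, and $b = \mathrm{diag}(s,\dots,s,\underbrace{t}_{k},s,\dots,s)$ with the single $t$ in position $k$. Then the sorted eigenvalue list of each of $a,b$ is $(t,s,s,\dots,s)$, so $s(a) = s(b) = c_1 t + (c_2 + \dots + c_n)s$. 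On the other hand $|c| = \mathrm{diag}(\sqrt{ab}\,)$ has entries $\sqrt{st}$ in positions $k$ and $k+1$ and $s$ in all other positions; its sorted list is therefore $(\underbrace{\sqrt{st},\dots}_{\text{2 copies of }\sqrt{st}},s,\dots)$ provided $\sqrt{st} \ge s$ (true since $t \ge s$), so $s(c) = (c_1+c_2)\sqrt{st} + (c_3 + \dots + c_n)s$ — the two copies of $\sqrt{st}$ are hit by the two largest coefficients $c_1, c_2$.

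Now I would examine the inequality $s(c) > s(a)$, i.e.\ $(c_1+c_2)\sqrt{st} + (c_3+\dots+c_n)s > c_1 t + (c_2+\dots+c_n)s$, which simplifies to $(c_1+c_2)\sqrt{st} - c_1 t > c_1 s$, i.e.\ $c_1\sqrt{st} + c_2\sqrt{st} > c_1 t + c_1 s \ge 2c_1\sqrt{st}$ by AM–GM — wait, this is the wrong direction, so the naive choice needs adjusting. The honest fix: one should instead exploit $c_k < c_{k+1}$ by making the block matrix' large entry land, for $c$, on a coordinate that gets paired with a large coefficient while for $a$ (resp.\ $b$) it gets paired with a smaller one. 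Concretely, take $a$ to have its distinguished large entry in a position $> k$ whose rank among $a$'s entries is forced to be large (small coefficient), but arrange the geometric means so $c$'s large entries get ranks $\le k$ (large coefficients). After choosing the multiplicities and the ratio $t/s$ appropriately, the discrepancy $s(c)^2 - s(a)s(b)$ becomes strictly positive precisely because of the gap $c_{k+1} - c_k > 0$. I expect the main obstacle to be exactly this bookkeeping: selecting the pattern of entries (how many copies of each value, in which coordinates) so that the decreasing rearrangement of $|c|$ pairs the geometric-mean values against $c_1,\dots,c_k$ while those of $a$ and $b$ are forced to pair their large values against $c_{k+1}$ or later — then a one-parameter limit ($t \to \infty$ with $s$ fixed, say) makes the leading terms in $t$ dominate and the inequality $s(c)^2 > s(a) s(b)$ reduces to a strict inequality among the coefficients that holds iff $c_k < c_{k+1}$. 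Once the pattern is fixed, verifying positivity of the block matrix is the trivial diagonal check above, and the failure of 2-positivity follows from Proposition~\ref{prop:2-positive-W*} (condition (iv), Schwarz) being violated, or directly from non-positivity of the $2\times 2$ scalar matrix.
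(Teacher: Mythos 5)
Your setup is the same as the paper's: restrict to positive diagonal matrices, note that $\begin{pmatrix} a & c \\ c^* & b\end{pmatrix}\ge 0$ holds for $c$ the entrywise geometric mean of $a$ and $b$, and aim for $s(c)^2>s(a)s(b)$. But the proof is not complete: your one concrete construction fails (as you observe), and what replaces it is only a description of the bookkeeping still to be done, which is precisely where the content of the proposition lies. Worse, the finishing move you propose --- send $t\to\infty$ so that ``leading terms dominate'' --- does not work. For any pattern in which $a$ and $b$ take the two values $t$ and $s$ on index sets $S_a,S_b$ and $c=\sqrt{ab}$, one has $s(a)s(b)\sim t^2\bigl(\sum_{i\le |S_a|}c_i\bigr)\bigl(\sum_{i\le |S_b|}c_i\bigr)$ as $t\to\infty$, while $s(c)^2$ grows at most like $t^2\bigl(\sum_{i\le |S_a\cap S_b|}c_i\bigr)^2\le s(a)s(b)$ (or only like $t$ if $S_a\cap S_b=\emptyset$). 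So the limit argument can only succeed when $c_1=\dots=c_k=0$; in general the counterexample exists only for $t$ in a \emph{bounded} window, and identifying that window is part of the proof.

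The paper's construction, which is the piece you are missing, is: set $a_i=b_i=t$ for $i\le k-1$, $a_k=t,\ a_{k+1}=1$, $b_k=1,\ b_{k+1}=t$, and $a_i=b_i=1$ for $i\ge k+2$, with $c=\sqrt{ab}$ entrywise. Then the sorted lists give
$s(a)=s(b)=t\sum_{i=1}^{k-1}c_i+tc_k+c_{k+1}+\sum_{i=k+2}^n c_i$ and
$s(c)=t\sum_{i=1}^{k-1}c_i+\sqrt{t}\,(c_k+c_{k+1})+\sum_{i=k+2}^n c_i$,
so everything cancels except the middle terms, and $s(c)>s(a)$ reduces to $tc_k+c_{k+1}<\sqrt{t}\,(c_k+c_{k+1})$, i.e.\ $(\sqrt{t}-1)(\sqrt{t}\,c_k-c_{k+1})<0$. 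This holds exactly for $1<\sqrt{t}<c_{k+1}/c_k$, which is a nonempty range precisely because $c_k<c_{k+1}$. Note that the mechanism is not ``$a$'s large entry gets a small coefficient'' (its large entries occupy ranks $1,\dots,k$, i.e.\ the \emph{large} coefficients); rather, the padding by $k-1$ copies of $t$ forces the comparison to happen at ranks $k$ and $k+1$, where $c$ smooths the pair $(t,1)$ into $(\sqrt{t},\sqrt{t})$, and this smoothing increases the weighted sum exactly when the weight at rank $k+1$ exceeds the one at rank $k$ (for suitable bounded $t$). Without this explicit pattern and the bounded choice of $t$, the argument does not go through.
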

\begin{proof}
Suppose that  $c_k <c_{k+1}$.
We can choose a number $t>1$ such that $c_{k+1} > \sqrt{t} c_k$.  
Then it holds $tc_k + c_{k+1} < \sqrt{t}(c_k + c_{k+1})$.
Consider the following positive diagonal 
matrices 
$$
a ={\rm diag}(a_1,a_2,\dots,a_n), b ={\rm diag}(b_1,b_2,\dots,b_n)
$$
$$
c ={\rm diag}(c_1,c_2,\dots,c_n)
$$ 
 in  
$M_n({\mathbb C})$ such that 
$$
a_i = b_i = c_i = t  \ \text{for}  \ i = 1,\dots ,k-1,  
$$
$$
a_i = b_i = c_i = 1 \ \text{for} \ i = k+2,\dots ,n
$$ 
and 
$$
a_k = t, a_{k+1} = 1, \ b_k = 1, b_{k+1} = t, \ c_k = \sqrt{t}, c_{k+1} = \sqrt{t}.
$$
Define 2 by 2 operator matrix $A \in M_2(M_n({\mathbb C}))$ by 
$$
A = \begin{pmatrix}
     a & c\\
     c & b
\end{pmatrix}.
$$
Since $c = \sqrt{a} I \sqrt{b}$, $A$ is positive.  Consider 
$$
B = \begin{pmatrix}
    s(a) & s(c) \\
    s(c) & s(b)
\end{pmatrix}
=  \begin{pmatrix}
     \sum _{i=1}^n  c_i \lambda_i(a) & \sum _{i=1}^n  c_i \lambda_i(c)\\
     \sum _{i=1}^n  c_i \lambda_i(c) & \sum _{i=1}^n  c_i \lambda_i(b)
\end{pmatrix}.
$$
Then we have
\begin{align*}
  s(a)=s(b) & = t\sum_{i=1}^k c_i + \sum_{i=k+1}^n c_i = t\sum_{i=1}^{k-1} c_i +tc_k + c_{k+1} + \sum_{i=k+2}^n c_i  \\
       & < t\sum_{i=1}^{k-1} c_i +\sqrt{t}(c_k + c_{k+1}) + \sum_{i=k+2}^n c_i = s(c).
\end{align*} 
Since $\det  B = s(a)s(b) - s(c)^2 < 0$, this shows that $s$ is not 2-positive. 
\end{proof}

\begin{corollary} 
Let $||| \ ||| : A \rightarrow  {\mathbb C}$ be an semi-norm on a general $C^*$-algebra $A$. 
If $\varphi  = ||| \ |||$ satisfies the condition (2) in Proposition \ref{prop:2-positive-C*}, 
then $||| \ |||$ is 2-positive without assuming that $||| \ |||$ is operator norm continuous. 
\end{corollary}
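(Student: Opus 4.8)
The plan is to reproduce, essentially word for word, the proof of the implication (2)$\Rightarrow$(1) in Proposition~\ref{prop:2-positive-C*}, and to point out that its sole use of operator norm continuity can be replaced, when $\varphi=|||\ |||$ is a seminorm, by the triangle inequality for $|||\ |||$. I would first reduce to the case that $A$ is unital (this is the setting of interest, in particular $A=M_n(\mathbb{C})$). So let $\varphi=|||\ |||$ be a seminorm on a unital $C^*$-algebra $A$ satisfying the four conditions (i)--(iv) of Proposition~\ref{prop:2-positive-C*}, and suppose $\begin{pmatrix} a & c \\ c^* & b \end{pmatrix}\ge 0$ with $a,b,c\in A$; the goal is $\begin{pmatrix} \varphi(a) & \varphi(c) \\ \varphi(c^*) & \varphi(b) \end{pmatrix}\ge 0$.

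If $a$ and $b$ are invertible, nothing changes from Proposition~\ref{prop:2-positive-C*}: conjugating by ${\rm diag}(a^{-1/2},b^{-1/2})$ shows that $k:=a^{-1/2}cb^{-1/2}$ is a contraction, hence $c=a^{1/2}kb^{1/2}$, and then Schwarz inequality (iv) together with monotonicity (ii) and $k^*k\le I$ yield $|\varphi(c)|^2=|\varphi(a^{1/2}kb^{1/2})|^2\le\varphi(a^{1/2}a^{1/2})\,\varphi(b^{1/2}k^*k\,b^{1/2})\le\varphi(a)\,\varphi(b)$; with positivity (i) and $*$-preservation (iii) this is exactly the desired $2\times 2$ positivity. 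This step appeals only to (i)--(iv), not to any continuity.

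For general $a,b$, I would apply the invertible case to $a+\varepsilon I$ and $b+\varepsilon I$: these are invertible, and $\begin{pmatrix} a+\varepsilon I & c \\ c^* & b+\varepsilon I \end{pmatrix}=\begin{pmatrix} a & c \\ c^* & b \end{pmatrix}+\varepsilon I\ge 0$, with the off-diagonal entry still $c$. Hence $\varphi(a+\varepsilon I)\,\varphi(b+\varepsilon I)\ge|\varphi(c)|^2$ for every $\varepsilon>0$. This is the only place where the proof of Proposition~\ref{prop:2-positive-C*} invoked operator norm continuity of $\varphi$; here instead I would use that $|||\ |||$ is a seminorm, so that by the triangle inequality and scalar homogeneity
\[
\bigl|\; |||a+\varepsilon I||| - |||a|||\;\bigr| \;\le\; |||\varepsilon I||| \;=\; \varepsilon\,|||I||| \;\longrightarrow\; 0 \qquad (\varepsilon\to 0),
\]
and likewise $|||b+\varepsilon I|||\to|||b|||$. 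Letting $\varepsilon\to 0$ gives $\varphi(a)\,\varphi(b)\ge|\varphi(c)|^2$, and combined with (i) and (iii) this yields $\begin{pmatrix} \varphi(a) & \varphi(c) \\ \varphi(c^*) & \varphi(b) \end{pmatrix}\ge 0$, i.e. $\varphi$ is 2-positive.

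There is essentially no obstacle here; the content of the corollary is precisely the observation that operator norm continuity entered Proposition~\ref{prop:2-positive-C*} only through the convergence $|||a+\varepsilon I|||\to|||a|||$, which a seminorm delivers automatically. The only points needing a word of care are the (routine) reduction to the unital case and the reminder that $*$-preservation --- for a $[0,\infty)$-valued seminorm just the identity $|||a^*|||=|||a|||$ --- is assumed in hypothesis (2)(iii), so it does not have to be verified separately.
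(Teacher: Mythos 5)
Your proposal is correct and is essentially the paper's own argument: the authors likewise observe that the only use of operator norm continuity in the proof of Proposition \ref{prop:2-positive-C*} is the convergence $|||a+\varepsilon I|||\to|||a|||$, which the triangle inequality $|||a+\varepsilon I|||-|||a|||\le \varepsilon|||I|||$ supplies for any seminorm. The extra detail you give (re-running the invertible case) is harmless but not needed beyond the one-line observation.
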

\begin{proof}
Without assuming that $||| \ |||$ is operator norm continuous, it is enough to note that 
$$
|||a +\varepsilon I||| - |||a||| \leq |||\varepsilon I||| = \varepsilon|||I||| \rightarrow 0. 
$$
\end{proof}

\begin{theorem} 
Let $\varphi = \varphi_{\alpha}$ be  a non-linear trace of Choquet type associated with  
a monotone increasing function $\alpha: \{0,1,2, \dots, n\} \rightarrow [0, \infty)$  with $\alpha(0) = 0$ and 
$\alpha(1) > 0$.  Put $c_i := \alpha(i) - \alpha(i - 1)$ for $i =1,2,\dots,n$. Recall that 
$ \varphi_{\alpha} = \sum _{i=1}^n  c_i \lambda_i$. 
Define $|||a|||_{\alpha}:= \varphi_{\alpha}(|a|)$ for $a \in M_n({\mathbb C})$. 
Then the following conditions are equivalent: 
\begin{enumerate}
\item[$(1)$] $\alpha$ is concave in the sense that 
$\frac{\alpha(i +1) + \alpha(i - 1)}{2} \leq \alpha(i), \;  (i =1,2,$ $\dots,n-1)$.
\item[$(2)$] $(c_i)_i$ is a decreasing sequence: $c_1 \geq c_2 \geq \dots \geq c_n$.
\item[$(3)$] $||| \ |||_{\alpha}$ is a unitary invariant norm on $\in M_n({\mathbb C})$.
\item[$(4)$] $||| \ |||_{\alpha}$ is 2-positive.
\end{enumerate}
\end{theorem}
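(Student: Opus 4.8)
The plan is to prove the cycle of implications $(1)\Leftrightarrow(2)\Rightarrow(3)\Rightarrow(4)\Rightarrow(2)$, which yields the full equivalence. Throughout I would use the formula $\varphi_\alpha=\sum_{i=1}^n c_i\lambda_i$ from the preceding Proposition, so that $|||a|||_\alpha=\sum_{i=1}^n c_i s_i(a)$ with $s_i(a)=\lambda_i(|a|)$ the singular values of $a$, and I would note at the outset that the monotonicity of $\alpha$ gives $c_i\ge 0$ for all $i$.

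For $(1)\Leftrightarrow(2)$ I would simply rewrite the defining relation $c_i=\alpha(i)-\alpha(i-1)$: the concavity inequality $\alpha(i+1)+\alpha(i-1)\le 2\alpha(i)$ is, after rearrangement, exactly $\alpha(i+1)-\alpha(i)\le\alpha(i)-\alpha(i-1)$, i.e. $c_{i+1}\le c_i$, for each $i=1,\dots,n-1$. Hence $(1)$ holds if and only if $c_1\ge c_2\ge\dots\ge c_n$, which is $(2)$.

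The \emph{substantive} step is $(2)\Rightarrow(3)$. Here I would apply Abel summation by parts to write
\[
|||a|||_\alpha=\sum_{i=1}^n c_i s_i(a)=\sum_{k=1}^{n-1}(c_k-c_{k+1})\Bigl(\sum_{i=1}^k s_i(a)\Bigr)+c_n\sum_{i=1}^n s_i(a)=\sum_{k=1}^{n-1}(c_k-c_{k+1})\,\|a\|_{(k)}+c_n\,\|a\|_{(n)},
\]
where $\|a\|_{(k)}=\sum_{i=1}^k s_i(a)$ is the Ky Fan $k$-norm (cf. the Example above). Each $\|\cdot\|_{(k)}$ is a unitary invariant norm, a classical fact resting on the Ky Fan dominance inequality $\sum_{i=1}^k s_i(a+b)\le\sum_{i=1}^k s_i(a)+\sum_{i=1}^k s_i(b)$; see \cite{bhatia1}. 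Under hypothesis $(2)$ all the coefficients $c_k-c_{k+1}$ and $c_n$ are nonnegative, so $|||\cdot|||_\alpha$ is a nonnegative combination of unitary invariant seminorms, hence itself a unitarily invariant, positively homogeneous, subadditive functional; and since $c_1=\alpha(1)>0$ we have $|||a|||_\alpha\ge c_1 s_1(a)=c_1\|a\|$, so $|||a|||_\alpha=0$ forces $a=0$. Thus $|||\cdot|||_\alpha$ is a unitary invariant norm.

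Finally, $(3)\Rightarrow(4)$ is immediate from Corollary~\ref{cor:unitary-invariant-norm-2-positive}, and $(4)\Rightarrow(2)$ is the contrapositive of Proposition~\ref{prop:not-2-positive}: if $(c_i)_i$ were not decreasing, say $c_k<c_{k+1}$, then $a\mapsto\varphi_\alpha(|a|)=|||a|||_\alpha$ would fail to be $2$-positive. This closes the cycle. The only step demanding genuine work is $(2)\Rightarrow(3)$, and even there the difficulty dissolves once one recognizes $|||\cdot|||_\alpha$ as a nonnegative combination of Ky Fan norms: subadditivity then reduces to the known subadditivity of the Ky Fan norms, while the remaining implications are either elementary rewritings or direct citations of results already established above.
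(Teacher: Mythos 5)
Your proposal is correct and follows essentially the same route as the paper: the trivial rewriting for $(1)\Leftrightarrow(2)$, Abel summation expressing $|||\cdot|||_{\alpha}$ as a nonnegative combination of Ky Fan norms for $(2)\Rightarrow(3)$, and citations of Corollary~\ref{cor:unitary-invariant-norm-2-positive} and Proposition~\ref{prop:not-2-positive} for the remaining implications. Your explicit check of definiteness via $|||a|||_{\alpha}\geq c_1\|a\|$ (using $\alpha(1)>0$) is a detail the paper leaves implicit, but it is not a different approach.
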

\begin{proof}
It is trivial that (1) $\Leftrightarrow$ (2). \\
(2) $\Rightarrow$ (3): Suppose that $(c_i)_i$ is a decreasing sequence. 
The Ky Fan $k$-norms is defined as 
$$
|| a ||_{(k)} := s_1(a) + s_1(a) + \dots + s_k(a) = \lambda_1(|a|) + \lambda_2(|a|)+ \dots + \lambda_k(|a|))
$$ 
for $a \in M_n({\mathbb C})$. 
Then 
$$
|||a|||_{\alpha}= \varphi_{\alpha}(|a|) = \sum_{i =1}^n c_i \lambda_i(|a|) = 
c_n || a ||_{(n)} + \sum_{i =1}^{n-1}( c_i  - c_{i+1})|| a ||_{(i)}
$$
Therefore $||| \ |||_{\alpha}$ is a unitary invariant norm on $\in M_n({\mathbb C})$. \\ 
(3) $\Rightarrow$ (4):Use Corollary \ref{cor:unitary-invariant-norm-2-positive}. \\
(4) $\Rightarrow$ (2):Use Proposition \ref{prop:not-2-positive}. 
\end{proof}

\section{non-linear traces of Sugeno type}
In this section we study non-linear traces of Sugeno type on matrix algebras. 
We recall  the Sugeno integral with respect to a monotone measure on a finite set 
$\Omega  = \{1,2, \dots, n\}$.  

\begin{definition} \rm 
The  discrete Sugeno integral  of $f = (x_1, x_2,\dots, x_n) \in [0,\infty)^n$ 
with respect to a monotone measure $\mu$ on a finite set 
$\Omega  = \{1,2, \dots, n\}$ is defined as follows:  
$$
{\rm (S)}\int f d\mu = \vee_{i=1}^{n} (x_{\sigma(i) } \wedge \mu(A_i) ) , 
$$
where $\sigma$ is a permutation on $\Omega$ such that  
$x_{\sigma(1)} \geq x_{\sigma(2)} \geq  \dots \geq x_{\sigma(n)}$, 
$A_i = \{\sigma(1),\sigma(2),\dots,\sigma(i)\}$ and 
$\vee =\max$ , $\wedge = \min$. 
Here we should note that 
$$
 f = \vee_{i=1}^{n} (x_{\sigma(i) } \chi_{A_i}) .
$$
\end{definition}

Let $A = {\mathbb C}^n$ and define 
$(\text{{\rm S-}}\varphi)_{\mu} :( {\mathbb C}^n)^+ \rightarrow {\mathbb C}^+$ 
by the  Sugeno integral $(\text{{\rm S-}}\varphi)_{\mu}(f) ={\rm (S)}\int f d\mu$.  
Then $(\text{{\rm S-}}\varphi)_{\mu}$ is a non-linear monotone positive map.

We shall consider a matrix version of the discrete Sugeno integral. 

\begin{definition} \rm
Let $\alpha: \{0,1,2, \dots, n\} \rightarrow [0, \infty)$ be a 
monotone increasing function with $\alpha(0) = 0$ and 
$\mu_{\alpha}$ be the associated permutation invariant 
monotone measure on $\Omega  = \{1,2, \dots, n\}$. 
Define 
$\psi_{\alpha} : (M_n({\mathbb C}))^+ \rightarrow  {\mathbb C}^+$ 
as follows:  For $a \in (M_n({\mathbb C}))^+$, 
let $\lambda(a) = (\lambda_1(a),\lambda_2(a),$ $\dots, \lambda_n(a))$ be the list of the 
eigenvalues of $a$ in decreasing order :
$\lambda_1(a) \geq \lambda_2(a) \geq \dots \geq \lambda_n(a)$ with 
counting multiplicities.  
Let
\begin{align*}
\psi_{\alpha}(a) &= \lor_{i=1} ^{n} ( \lambda_i(a) \land \mu_{\alpha}(A_i)) \\
&= \lor_{i=1} ^{n} ( \lambda_i(a) \land \alpha( ^{\#}A_i) ) \\
&= \lor_{i=1} ^{n} ( \lambda_i(a) \land \alpha(i)) ,
\end{align*}
where $A_i = \{1,2,\dots,i \}$.
We call $\psi_{\alpha}$ the non-linear trace of Sugeno type associated with $\alpha$. 
\end{definition}

\begin{definition} \rm A non-linear positive map 
Let $\psi : (M_n({\mathbb C}))^+ \rightarrow  {\mathbb C}^+$  be a non-linear positive map.
\begin{itemize}
  \item $\psi$ is {\it positively F-homogeneous} if 
$\psi(kI \land a) = k \land \psi(a)$ for any $a \in  (M_n({\mathbb C}))^+$ and any scalar $k \geq 0$.
  \item $\psi$ is {\it comonotonic F-additive on the spectrum} if 
$$
\psi(f(a)  \lor g(a)) = \psi(f(a)) \lor \psi(g(a)) 
$$  
for any $a \in  (M_n({\mathbb C}))^+$ and 
any comonotonic functions $f$ and $g$   in $C(\sigma(a))$, where 
$f(a)$ is a functional calculus of $a$ by $f$. 
  \item $\psi$ is {\it monotonic increasing F-additive on the spectrum} if 
$$
\psi(f(a)  \lor  g(a)) = \psi(f(a)) \lor  \psi(g(a))
$$  
for any $a \in  (M_n({\mathbb C}))^+$ and 
any monotone increasing functions $f$ and $g$  in $C(\sigma(a))$. 
Then by induction, we also have 
\[   \psi(\bigvee_{i=1}^n f_i(a)) = \bigvee_{i=1}^n \psi(f(a_i))  \]
for any monotone increasing functions $f_1, f_2, \dots, f_n$ in  $C(\sigma(a))$.
\end{itemize}
\end{definition}

We shall characterize non-linear traces of Sugeno type. 
\begin{theorem} 
Let $\psi : (M_n({\mathbb C}))^+ \rightarrow  {\mathbb C}^+$ be a non-linear 
positive map.  Then the following are equivalent:
\begin{enumerate}
\item[$(1)$]  $\psi$ is a non-linear trace $\psi = \psi_{\alpha}$  of Sugeno type associated with  
a monotone increasing function $\alpha: \{0,1,2, \dots, n\} \rightarrow [0, \infty)$  with $\alpha(0) = 0$.
\item[$(2)$] $\psi$ is monotonic increasing F-additive on the spectrum, unitarily invariant, monotone,  
positively F-homogeneous and 
$$\lim_{c\rightarrow \infty} \psi(cI) < +\infty .$$ 
\item[$(3)$] $\psi$ is comonotonic F-dditive on the spectrum, unitarily invariant, monotone and 
positively F-homogeneous and 
$$\lim_{c\rightarrow \infty} \psi(cI) < +\infty .$$  
\end{enumerate}
\end{theorem}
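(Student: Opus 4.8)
The plan is to mirror, step by step, the proof of the characterization of non-linear traces of Choquet type, replacing $+$ and positive scalar multiplication by $\vee=\max$ and $\wedge=\min$; the algebraic facts that make this work are that $[0,\infty)$ is a distributive lattice, so that $(x\vee y)\wedge z=(x\wedge z)\vee(y\wedge z)$ and $x\wedge\bigvee_j y_j=\bigvee_j(x\wedge y_j)$, and that each monotone increasing $f\in C(\sigma(a))$, as well as each map $t\mapsto k\wedge t$, preserves the decreasing order of the eigenvalue list. For $(1)\Rightarrow(2)$ I would argue as in the Choquet case: if $\psi=\psi_\alpha$ and $f,g\in C(\sigma(a))$ are monotone increasing, then $f(a)=\sum_i f(\lambda_i(a))p_i$ is already a spectral decomposition with $\lambda_i(f(a))=f(\lambda_i(a))$, the same holds for $g$, and — since a maximum of two decreasing lists is decreasing — also for $f\vee g$; distributivity of $\wedge$ over $\vee$ then gives $\psi_\alpha((f\vee g)(a))=\psi_\alpha(f(a))\vee\psi_\alpha(g(a))$, i.e. monotonic increasing F-additivity. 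Monotonicity of $\psi_\alpha$ comes from the min--max principle ($a\le b\Rightarrow\lambda_i(a)\le\lambda_i(b)$); positive F-homogeneity from $\lambda_i(kI\wedge a)=k\wedge\lambda_i(a)$ together with $\bigvee_i k\wedge(\lambda_i(a)\wedge\alpha(i))=k\wedge\psi_\alpha(a)$; unitary invariance is built in; and $\lim_{c\to\infty}\psi_\alpha(cI)=\lim_{c\to\infty}(c\wedge\alpha(n))=\alpha(n)<\infty$.

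The core is $(2)\Rightarrow(1)$. I would fix a resolution $I=p_1+\dots+p_n$ into minimal projections, set $\alpha(0)=0$ and $\alpha(j)=\lim_{c\to\infty}\psi(c(p_1+\dots+p_j))$ for $1\le j\le n$; this limit exists because $c\mapsto\psi(c(p_1+\dots+p_j))$ is increasing (monotonicity) and bounded above by $\lim_{c\to\infty}\psi(cI)<\infty$, and $\alpha$ is independent of the chosen projections (unitary invariance) and increasing (monotonicity). The key identity is
\[\psi(\lambda(p_1+\dots+p_j))=\lambda\wedge\alpha(j)\qquad(\lambda\ge0),\]
obtained by applying positive F-homogeneity to the functional-calculus identity $\lambda(p_1+\dots+p_j)=\lambda I\wedge N(p_1+\dots+p_j)$, valid for $N\ge\lambda$, and letting $N\to\infty$. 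Given $a=\sum_i\lambda_i(a)p_i$, I define $f_i\in C(\sigma(a))$ by $f_i(x)=\lambda_i(a)$ for $x\ge\lambda_i(a)$ and $f_i(x)=0$ otherwise; these are monotone increasing, $\bigvee_i f_i$ is the identity on $\sigma(a)$ so $a=\bigvee_i f_i(a)$, and $f_i(a)=\lambda_i(a)(p_1+\dots+p_{j_i})$ where $j_i$ is the largest index with $\lambda_{j_i}(a)=\lambda_i(a)$. Then the $n$-fold monotonic increasing F-additivity and the key identity give $\psi(a)=\bigvee_i\psi(f_i(a))=\bigvee_i(\lambda_i(a)\wedge\alpha(j_i))$, and grouping the indices into blocks on which $\lambda_i(a)$ is constant — so that $\alpha(j_i)$ is attained at the last index $m$ of the block, which also equals the common value of $j_i$ there — identifies this with $\bigvee_i(\lambda_i(a)\wedge\alpha(i))=\psi_\alpha(a)$.

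Finally $(3)\Rightarrow(2)$ is immediate because monotone increasing functions are comonotonic, and $(1)\Rightarrow(3)$ runs exactly like $(1)\Rightarrow(2)$ once, for a comonotonic pair $f,g\in C(\sigma(a))$, one picks a permutation $\tau$ with $f(\lambda_{\tau(1)}(a))\ge\dots\ge f(\lambda_{\tau(n)}(a))$ and $g(\lambda_{\tau(1)}(a))\ge\dots\ge g(\lambda_{\tau(n)}(a))$ (ordering with multiplicities, as in the Choquet case), so that $\lambda_i(f(a))=f(\lambda_{\tau(i)}(a))$, $\lambda_i(g(a))=g(\lambda_{\tau(i)}(a))$ and $\lambda_i((f\vee g)(a))=f(\lambda_{\tau(i)}(a))\vee g(\lambda_{\tau(i)}(a))$; the same distributivity computation then applies.

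The main obstacle is the implication $(2)\Rightarrow(1)$. First, one must locate precisely where the finiteness hypothesis $\lim_{c\to\infty}\psi(cI)<\infty$ enters: it is exactly what makes every $\alpha(j)$, and in particular $\alpha(n)$, finite, hence $\psi_\alpha$ a genuine Sugeno-type trace; dropping it, the map $\psi=\lambda_1$ satisfies monotonic increasing F-additivity on the spectrum, unitary invariance, monotonicity and positive F-homogeneity yet fails to be of Sugeno type. Second, one has to handle the bookkeeping of repeated eigenvalues in $a=\bigvee_i f_i(a)$, where distinct $f_i$ may coincide and $j_i\neq i$ in general; the block grouping described above is what makes the final identification $\bigvee_i(\lambda_i(a)\wedge\alpha(j_i))=\psi_\alpha(a)$ transparent.
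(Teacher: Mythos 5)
Your proposal is correct and follows essentially the same route as the paper: the same distributivity computation for $(1)\Rightarrow(2)$ and $(1)\Rightarrow(3)$, and for $(2)\Rightarrow(1)$ the same construction $\alpha(j)=\lim_{c\to\infty}\psi(c(p_1+\dots+p_j))$ together with the decomposition $a=\bigvee_i f_i(a)$ into monotone increasing step functions and positive F-homogeneity. Your explicit treatment of repeated eigenvalues (the indices $j_i$ and the block grouping) and the remark that $\lambda_1$ shows the necessity of the finiteness hypothesis are welcome refinements of details the paper leaves implicit, but they do not change the argument.
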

\begin{proof}
(1)$\Rightarrow$(2) Assume that $\psi$ is a non-linear trace $\psi = \psi_{\alpha}$  of Sugeno type associated with  
a monotone increasing function $\alpha$. 
For $a \in (M_n({\mathbb C}))^+$, 
let 
$$
a = \sum_{i=1}^n \lambda_i(a) p_i
$$
be the spectral decomposition of $a$, where 
$\lambda(a) = (\lambda_1(a),\lambda_2(a),\dots, \lambda_n(a))$ is the list of the 
eigenvalues of $a$ in decreasing order :
$\lambda_1(a) \geq \lambda_2(a) \geq \dots \geq \lambda_n(a)$ with 
counting multiplicities.  Then the spectrum $\sigma(a) = \{ \lambda_i(a) \ | \  i =1,2,\dots, n\} $. 
For any monotone increasing functions $f$ and $g$  in $C(\sigma(a))$, 
we have the  spectral decompositions 
$$
f(a) = \sum_{i=1}^n f(\lambda_i(a)) p_i, \ \ \ g(a) = \sum_{i=1}^n g(\lambda_i(a)) p_i, 
$$
$$
\text{and }  (f \lor g)(a) = \sum_{i=1}^n (f(\lambda_i(a)) \lor (g(\lambda_i(a))) p_i. 
$$
Since $f \lor g$ is also monotone increasing on $\sigma(a)$ as well as $f$ and $g$, we have 
$$
f(\lambda_1(a)) \geq f(\lambda_2(a)) \geq \dots \geq f(\lambda_n(a)), 
$$ 
$$
g(\lambda_1(a)) \geq g(\lambda_2(a)) \geq \dots \geq g(\lambda_n(a)), 
$$ 
$$
\text{and }  (f \lor g)(\lambda_1(a)) \geq (f \lor g)(\lambda_2(a)) \geq \dots \geq (f \lor g)(\lambda_n(a)). 
$$ 
Therefore
$$
\psi_{\alpha}(f(a)) = \lor_{i=1} ^{n} ( f(\lambda_i(a)) \land \alpha(i)), 
$$
$$
\psi_{\alpha}(g(a)) = \lor_{i=1} ^{n} ( g(\lambda_i(a)) \land \alpha(i)), 
$$
and 
\begin{align*}
& \psi_{\alpha}((f \lor g)(a))  = \lor_{i=1} ^{n} ( (f \lor g)(\lambda_i(a)) \land \alpha(i))\\
 = & \lor_{i=1} ^{n} (((f (\lambda_i(a)) \lor g (\lambda_i(a)) ) \land \alpha(i))\\
 = & \lor_{i=1} ^{n} (((f (\lambda_i(a)) \land \alpha(i))  \lor (g(\lambda_i(a)) \land \alpha(i)) )  \\
 = & (\lor_{i=1} ^{n} ( (f (\lambda_i(a)) \land \alpha(i)))  \lor ( \lor_{i=1} ^{n} ( (g (\lambda_i(a)) \land \alpha(i)))\\
 = & \psi_{\alpha}(f(a)) \lor  \psi_{\alpha}(g(a)) . 
\end{align*}
Thus $\psi_{\alpha}$ is monotonic increasing F-additive on the spectrum. 

For $a,b \in (M_n({\mathbb C}))^+$, let 
$\lambda(a) = (\lambda_1(a),\lambda_2(a),\dots, \lambda_n(a))$ be  the list of the 
eigenvalues of $a$ in decreasing order and 
$\lambda(b) = (\lambda_1(b),\lambda_2(b),\dots,$  $\lambda_n(b))$ be  the list of the 
eigenvalues of $b$ in decreasing order.  
Assume that $a \leq b$. 
By the mini-max  principle for eigenvalues, we have that 
$\lambda_i(a) \leq \lambda_i(b)$ for $i = 1,2,\dots,n$.  Hence 
\begin{align*}
\psi_{\alpha}(a) & = \lor_{i=1} ^{n} ( \lambda_i(a) \land \alpha(i)) \\
& \leq \lor_{i=1} ^{n} ( \lambda_i(b) \land \alpha(i)) = \psi_{\alpha}(b).
\end{align*}
Thus $\psi_{\alpha}$ is monotone. 

For a positive scalar $k$ , $kI \land a = \sum_{i=1}^n(k \land \lambda_i(a)) p_i$ and 
$\lambda(ka) = (k \land \lambda_1(a),k \land \lambda_2(a),\dots,k  \land \lambda_n(a))$ is the list of the 
eigenvalues of $kI \land a$ in decreasing order, hence 
\begin{align*}
\psi_{\alpha}(k \land a) &= \lor_{i=1} ^{n} ( k \land \lambda_i(a) \land \alpha(i)) \\
 &= k \land (\lor_{i=1} ^{n} (\lambda_i(a) \land \alpha(i)) ) 
 = k  \land \psi_{\alpha}(a). 
\end{align*}
Thus $\psi_{\alpha}$ is positively F-homogeneous. 
It is clear that $\psi_{\alpha}$ is unitarily invariant by definition and
$$
\lim_{c\rightarrow \infty} \psi_{\alpha}(cI) = \lim_{c\rightarrow \infty}( \lor_{i=1} ^{n} (c \land \alpha(i)) ) 
= \lim_{c\rightarrow \infty} \lor_{i=1} ^{n} \alpha(i) = \alpha (n) < +\infty.
$$

(2)$\Rightarrow$(1) 
Assume that $\psi$ is monotonic increasing F-additive on the spectrum, unitarily invariant, monotone,  
positively F-homogeneous and $\lim_{c\rightarrow \infty} \psi(cI) < +\infty$. 
Let $I = p_1 + p_2 + \dots + p_n$ be the decomposition of the identity by 
minimal projections. Define  a function $\alpha: \{0,1,2, \dots, n\} \rightarrow [0, \infty)$  with $\alpha(0) = 0$
by 
$$
\alpha (i) := \lim_{c\rightarrow \infty}\psi(c(p_1+ p_2 + \dots + p_i)) \leq \lim_{c\rightarrow \infty}\psi(cI) < \infty. 
$$
Since $\psi$ is monotone, $\alpha$ is monotone increasing. 
Since  $\psi $ is unitarily invariant, $\alpha$ does not depend 
on the choice of minimal projections.  
For $a \in (M_n({\mathbb C}))^+$, 
let 
$a = \sum_{i=1}^n \lambda_i(a) p_i$
be the spectral decomposition of $a$, where 
$\lambda(a) = (\lambda_1(a),\lambda_2(a),\dots, \lambda_n(a))$ is the list of the 
eigenvalues of $a$ in decreasing order with counting multiplicities.   

Define  functions $f, f_1, f_2, \dots, f_n \in C(\sigma(a))$  by $f(x) =x$ and for $i = 1,2,\dots,n$
$$
f_i = \lambda_i(a) \chi_{\{\lambda_1(a),\lambda_2(a),\dots, \lambda_i(a)\}} = \lambda_i(a)I \land 
c \chi_{\{\lambda_1(a),\lambda_2(a),\dots, \lambda_i(a)\}}
$$
for sufficient large $c \geq \lambda_1(a) = \|a\|$, which does not depend on such $c$. 
Each $f_i$ is monotone increasing function on $\sigma(a)$. Since 
$f = \lor_{i=1} ^{n} f_i$, we have that 
$
a = (\lor_{i=1} ^{n} f_i)(a). 
$
Since $\psi$ is monotonic increasing F-additive on the spectrum and positively F-homogeneous, 
we have that 
\begin{align*}
&  \psi(a) = \psi((\lor_{i=1} ^{n} f_i)(a))  = \lor_{i=1} ^{n}\psi(f_i(a)) \\
= &  \lor_{i=1} ^{n}\psi((\lambda_i(a)I \land 
c \chi_{\{\lambda_1(a),\lambda_2(a),\dots, \lambda_i(a)\}})(a)) \\
= &   \lor_{i=1} ^{n}(\lambda_i(a) \land 
\psi((c \chi_{\{\lambda_1(a),\lambda_2(a),\dots, \lambda_i(a)\}})(a)) \\
= &  \lor_{i=1} ^{n} (\lambda_i(a) \land \psi(c(p_1 + p_2 + \dots p_i)) \\
= &  \lor_{i=1} ^{n} (\lambda_i(a) \land (\lim_{c\to \infty}\psi(c(p_1 + p_2 + \dots p_i)) \\
= &  \lor_{i=1} ^{n} (\lambda_i(a) \land \alpha(i)) = \psi_{\alpha}(a).
\end{align*}
Therefore $\psi$ is a non-linear trace $\psi_{\alpha}$  of Sugeno type associated with  
a monotone increasing function $\alpha$. 

(3)$\Rightarrow$(2)  It is clear from the fact that any monotone increasing functions $f$ and $g$  in $C(\sigma(a))$ 
are comonotonic. 

(1)$\Rightarrow$(3)  For $a \in (M_n({\mathbb C}))^+$, 
let $a = \sum_{i=1}^n \lambda_i(a) p_i$
be the spectral decomposition of $a$, where 
$\lambda(a) = (\lambda_1(a),\lambda_2(a),\dots, \lambda_n(a))$ is the list of the 
eigenvalues of $a$ in decreasing order with counting multiplicities.   
For any comonotonic functions $f$ and $g$  in $C(\sigma(a))$, there exists a permutation 
$\tau$ on $\{1,2,\dots,n\}$ such that 
$$
f(\lambda_{\tau(1)}(a)) \geq f(\lambda_{\tau(2)}(a)) \geq \dots \geq f(\lambda_{\tau(n)}(a)), 
$$
$$
g(\lambda_{\tau(1)}(a)) \geq g(\lambda_{\tau(2)}(a)) \geq \dots \geq g(\lambda_{\tau(n)}(a))
$$
by ordering with multiplicities. 
Considering this fact, we can prove  (1)$\Rightarrow$(3) similarly as (1)$\Rightarrow$(2). 
\end{proof}

\end{document}